\def\ff{\varphi}
\newtheorem{rema}{Remark}
\newtheorem{claim}{Claim}
\newtheorem{lemma}{Lemma}
\newtheorem{corollary}{Corollary}
\newtheorem{prop}{Proposition}
\newtheorem{thm}{Theorem}
\def\imp{\underset{\text{a.s.}}{\Longrightarrow}}
\newcommand{\rmd}{ \mathrm{d}}
\def \d   {\delta}
\def \eps {\varepsilon}
\def\E{{\mathbb{E}}}
\def\P{{\mathbb{P}}}
\def\R{{\mathbb{R}}}
\def\N{{\mathbb{N}}}
\def\F{{\cal{F}}}
\def\XX{{\cal X}}
\newcommand{\supp}{\mathop{\mathrm{supp}}}
\def\limt{\lim_{t\to\infty}}
\def\limt0{\lim_{t\to 0}}
\def\|{ | }
\def\nn{{\sf n}}
\newcommand\as{\overset{\text{a.s.}}{\longrightarrow}}
\title{Convergence in the $p$-contest}
\author{Philip Kennerberg${}^*$ and Stanislav Volkov\footnote{Centre for Mathematical Sciences, Lund University, Box 118 SE-22100, Lund, Sweden, The research is partially supported  by the Swedish Research Council grant VR~2014-5157 and by Crafoord foundation grant 20190667.}} 
\begin{document}
\maketitle

\begin{abstract}
We study asymptotic properties of the following Markov system of $N \geq 3$ points in~$[0,1]$. At each time step, the point farthest from the current centre of mass, multiplied by a constant $p>0$, is removed and replaced by an independent $\zeta$-distributed point; the problem, inspired by variants of the Bak--Sneppen model of evolution and called {\it a $p$-contest}, was posed in~\cite{gkw}.  We obtain various criteria for the convergences of the system, both for $p<1$ and $p>1$.

In particular, when $p<1$ and $\zeta\sim U[0,1]$, we show that the  limiting configuration converges to zero. When $p>1$, we show that the configuration must converge to either zero or one, and we present an example where both outcomes are possible. Finally, when $p>1$, $N=3$ and $\zeta$ satisfies certain mild conditions (e.g.~$\zeta\sim U[0,1]$), we prove that the configuration converges to one a.s.

Our paper substantially extends the results of~\cite{GVW,KV} where it was assumed that $p=1$. Unlike the previous models, one can no longer use the Lyapunov function based just on the radius of gyration; when $0<p<1$ one has to find a more finely tuned function which turns out to be a supermartingale; the proof of this fact constitutes an unwieldy, albeit necessary, part of the paper. 
\end{abstract}

\smallskip
\noindent
{\em Keywords:} Keynesian beauty contest;  Jante's law, rank-driven process. \/

\noindent
{\em AMS 2010 Subject Classifications:} 60J05 (Primary) 60D05, 60F15,  60K35, 82C22, 91A15 (Secondary)

\section{Introduction}
This paper extends the results of~\cite{GVW} and~\cite{KV}
on the so-called {\em Keynesian beauty contest}, or, as it was called in~\cite{KV}, {\em Jante's law process}. Following~\cite{GVW}, we recall that in the Keynesian beauty contest, we have $N$ players guessing a number, and the person who guesses closest to the mean of all the $N$ guesses wins; see~\cite[Ch.\ 12, \S V]{keynes}.  The formal version, suggested by Moulin \cite[p.\ 72]{moulin}, assumes that this game is played by choosing numbers on the interval $[0,1]$, the  ``$p$-beauty contest'', in which the target is the mean value, multiplied  by a constant $p>0$.  For the applications of the $p$-contest in the game theory, we refer the reader to e.g.~\cite{ST}; see also~\cite{degr} and~\cite{GVW} and references therein for further applications and other relevant papers.

The version of the $p$-contest with $p\equiv 1$ was studied in~\cite{GVW,KV}. In~\cite{GVW} it was shown that in the model where at each unit of time the point farthest from the center of mass is replaced by a point chosen uniformly on $[0,1]$, then eventually all (but one) points converge almost surely to some random limit the support of which is the whole interval $[0,1]$; many of the results were extended for the version of the model on $\R^d$, $d\ge 2$. The results of~\cite{GVW} were further generalized  in~\cite{KV}, by removing the assumption that a new point is chosen uniformly on $[0,1]$, as well as by removing more than one point at once, these points being chosen in such a way that the moment of inertia of the resulting configuration is minimized. However, the case $p\ne 1$ was not addressed in either of these two papers.

Let us now formally define the model; the notation will be similar to those in~\cite{GVW,KV}. Let $\XX = \{ x_1, x_2, \ldots, x_N\}\in\R^N$ be an unordered $N$-tuple of points in $\R$, and $ (x_{(1)} , x_{(2)} , \dots, x_{(N)})$ be these points put in non-decreasing order, that is, $x_{(1)}\le x_{(2)}\le\dots\le x_{(N)}$. 
As in~\cite{GVW,KV} let us define the barycentre of the configuration as
\begin{align*}
\mu_N (x_1,\dots,x_N)&:= N^{-1} \sum_{i=1}^N x_i.
\end{align*}
Fix some $p>0$ and also define the {\em $p-$centre of mass} as $p\mu_N(x_1,\dots,x_N)$.

The point, farthest from the $p-$centre of mass, is called the {\em extreme} point of~$\XX$, and it can be either $x_{(1)}$ or $x_{(N)}$ (with possibility of a tie), and {\em the core} of $\XX$, denoted by $\XX'$, is constructed from $\XX$ by removing  the extreme point; in case of a tie between the left-most and the right-most point, we choose either of them with equal probability (same as in~\cite{GVW,KV}). Throughout the rest of the paper, $x_{(1)}(t),\dots,x_{(N-1)}(t)$ shall denote the points of the core\footnote{rather than of $\XX(t)$} $\XX'(t)$ put into non-decreasing order.

Our process runs as follows. Let $\XX(t)=\{X_1(t), \dots, X_N(t)\}$ be an unordered $N$-tuple of points in~$\R$ at time $t=0,1,2,\dots$. Given~$\XX(t)$, let~$\XX'(t)$ be the core of~$\XX(t)$ and replace $\XX(t)\setminus \XX'(t) $ by a $\zeta$-distributed random variable so that 
$$
\XX (t+1) = \XX'(t) \cup \{\zeta_{t+1}\},
$$
where $\zeta_{t}$, $t=1,2,\dots$, are i.i.d.\ random variables with a common distribution~$\zeta$.

Finally, to finish the specification of our process, we allow the initial configuration $\XX(0)$ to be arbitrary or random, with the only requirement being that all the points of $\XX(0)$ must lie in the support of $\zeta$.

Throughout the paper we will use the notation $A\imp B$ for two events $A$ and $B$, whenever $\P(A\cap B^c)=0$, that is, when $A\subseteq B$ up to a set of measure $0$.
We will also write, with some abuse of notations, that $\lim_{t\to\infty}\XX'(t)= a \in \R$ or equivalently $\XX'(t)\to a$ as $t\to\infty$ if $\XX'(t)\to (a,a,\dots,a)\in\R^{N-1}$, i.e.\ $\lim_{t\to\infty}x_{(i)}(t)=a$ for all $i=1,2,\dots,N-1$. 
Similarly, for an  interval $(a,b)$ we will write 
$\XX'(t)\in (a,b)$ whenever {\em all} $x_{(1)}(t),\dots,x_{(N-1)}(t)\in (a,b)$.
Finally, we will assume that $\inf\emptyset=+\infty$, and use the notation $y^+=\max(y,0)$ for $y\in \R$.

Also we require that $\zeta$ has a {\it full support} on $[0,1]$, that is, $\P(\zeta\in(a,b))>0$ for all $a,b$ such that  $0\le a<b\le 1$.

\section{The case $p<1$}
Throughout this Section we assume that $0<p<1$ and that $\supp\zeta=[0,1]$.  Because of the scaling invariance, our results may be trivially extended to the case when $\supp\zeta=[0,A]$, $A\in(0,\infty)$; some of them are even true when $A=\infty$; however, to simplify the presentation from now on we will deal only with the case $A=1$. 

First, we present some general statements; more precise results will follow in case where $\zeta\sim U[0,1]$.

\begin{prop}\label{liminfzero}
We have
\begin{itemize}
\item[(a)]
$\liminf_{t\to\infty}x_{(N-1)}(t)=0$;
\item[(b)]
$\P\left(\exists \lim_{t\to\infty} \XX'(t)\in(0,1]\right)=0$;
\item[(c)]
if $p<\frac 12 + \frac{1}{2(N-1)}$ then $\P\left(\lim_{t\to\infty} \XX'(t)=0\right)=1$;
\item[(d)]
if $p< \frac 12 + \frac{1}{N-2}$ then $\{x_{(1)}(t)\to 0\}\imp \{\lim_{t\to\infty}\XX'(t)= 0\}$.
\end{itemize}
\end{prop}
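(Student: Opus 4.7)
The plan is to prove the four parts so that (b) drops out of (a), while (c) and (d) proceed via Lyapunov-function constructions calibrated to the specific thresholds in the hypotheses. A common ingredient is the following threshold calculation. Suppose the core $\XX'(t)$ consists of $N-1-m$ points clustered near some $a>0$ and $m$ points near $0$, and $\zeta_{t+1}$ is close to $0$; in $\XX(t+1)=\XX'(t)\cup\{\zeta_{t+1}\}$ one then has $\mu\approx(N-1-m)a/N$, and a direct comparison of $|p\mu-\zeta_{t+1}|$ with $|a-p\mu|$ shows that $\zeta_{t+1}$ is the extreme of $\XX(t+1)$ iff $2p\mu>a$, i.e.\ iff $p>\frac{N}{2(N-1-m)}$. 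Setting $m=0$ recovers the threshold $\frac12+\frac1{2(N-1)}$ appearing in (c), and $m=1$ recovers $\frac12+\frac1{N-2}$ appearing in (d); moreover the inequality becomes strictly easier to meet as $m$ grows, so the ``pushing-down'' step iterates.

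For (a), I argue by contradiction: assume $\P(\liminf_{t\to\infty} x_{(N-1)}(t)>0)>0$, so with positive probability there exist random $\eps>0$ and $T$ with $x_{(N-1)}(t)\geq\eps$ for all $t\geq T$. Using the full support of $\zeta$ I construct, for every $\eps>0$ and every $p<1$, a finite sequence of intervals $I_1,\dots,I_K\subset[0,1]$ (with $K=K(\eps,p,N)$) such that $\P(\zeta\in I_k)>0$ and on the event $\bigcap_{k=1}^K\{\zeta_{t+k}\in I_k\}$, \emph{every} configuration $\XX(t)\subset[0,1]^N$ is driven to one with $x_{(N-1)}(t+K)<\eps/2$. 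For $p<\frac{N}{2(N-1)}$ one takes the $I_k$ all close to $0$ and iterates the threshold computation; for larger $p$, where a single small $\zeta$ would be expelled as the extreme, one first places $I_k$'s in a mid-range to shave off the rightmost large points (which are extreme for $p<1$ when $\mu$ is moderate) before beginning the small-$\zeta$ phase. L\'evy's conditional Borel--Cantelli lemma then forces $\{x_{(N-1)}(t)<\eps/2\}$ to occur infinitely often, contradicting the assumption.

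Part (b) is immediate from (a): if $\XX'(t)\to a$ on a positive-probability event with $a\in(0,1]$, then $x_{(N-1)}(t)\to a>0$, contradicting (a). For (c), the plan is to construct a Lyapunov function $V:[0,1]^N\to[0,\infty)$, more intricate than the radius of gyration (as the abstract forewarns), and to verify that $V(\XX(t))$ is a supermartingale under $p<\frac{N}{2(N-1)}$. The verification uses the threshold calculation to ensure that in each case of the extreme's identity the conditional drift is non-positive. Once $V(\XX(t))\to V_\infty\ge0$ a.s., combining with (b) forces $\XX'(t)\to 0$. For (d), the same scheme is run on the event $\{x_{(1)}(t)\to 0\}$: the guaranteed small leftmost contributes one of the $m$ small points in the threshold computation, so the relevant inequality becomes $p<\frac{N}{2(N-2)}=\frac12+\frac1{N-2}$, and the monotonicity of the threshold in $m$ allows the iteration to continue.

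The main obstacle I anticipate is the Lyapunov-function construction in (c), as the authors themselves acknowledge: the supermartingale identity must hold over \emph{all} configurations, not just clustered ones, and since the identity of the extreme switches as $2p\mu$ crosses $x_{(1)}+x_{(N)}$, the case analysis becomes intricate and the tuning of $V$ delicate. The ``pushing'' construction for (a), while conceptually cleaner, also requires care in the regime $p$ close to $1$: the probability lower bounds $\P(\zeta\in I_k)$ and the number of steps $K$ must be uniform across all configurations in $[0,1]^N$, which constrains the choice of $I_k$ when small $\zeta$'s are aggressively expelled as extremes.
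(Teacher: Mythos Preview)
Your plan for (a) is in the right spirit but needlessly complicated. The paper uses a single uniform scheme for all $p<1$: if $x_{(N-1)}(t)\le b$, sample the next $N-1$ points in $(pb,(p+\eps)b)$ with $p+2\eps<1$. Since $p\mu\le pb$ lies to the left of every such new point, the new point is never the leftmost, and as long as some old point remains to the right of $(p+\eps)b$ the new point is not the rightmost either; hence at each step an old point is expelled, and after $N-1$ steps the core lies in $(pb,(p+\eps)b]$. Iterating and invoking L\'evy's Borel--Cantelli gives (a). No case split on the size of $p$ is needed. Your observation that (b) is immediate from (a) is correct and in fact simpler than the paper's separate argument for (b).

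The real issue is (c). You have conflated Proposition~\ref{liminfzero}(c) with Theorem~\ref{thmplessone}. The delicate Lyapunov construction the abstract warns about is for Theorem~\ref{thmplessone}, which treats \emph{all} $p<1$; Proposition~\ref{liminfzero}(c) sits under the much stronger hypothesis $p<\frac{N}{2(N-1)}$, and there the proof is two lines. The threshold calculation you yourself wrote down (with $m=0$) is not merely a heuristic for clustered configurations: for \emph{every} configuration one has
\[
N\mu=x_{(1)}+x_{(N)}+\sum_{i=2}^{N-1}x_{(i)}\le (N-1)\bigl(x_{(1)}+x_{(N)}\bigr),
\]
so $p\mu<\frac{x_{(1)}+x_{(N)}}{2}$ whenever $\frac{2p(N-1)}{N}<1$, and the rightmost point is \emph{always} the extreme. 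Hence $x_{(N-1)}(t)$ is non-increasing, and (a) forces it to $0$. No supermartingale is required. Your proposal to build a Lyapunov function here is not wrong in principle---indeed the paper's function for Theorem~\ref{thmplessone} would cover this range---but you have misidentified where the difficulty lies and left the hard step (constructing $V$) entirely unspecified.

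For (d) the paper again avoids any Lyapunov argument: it invokes Corollary~\ref{corintN2p} (stated in the $p>1$ section but valid for $p\le 1$ by Remark~\ref{extraplessone}), which shows directly that $\{x_{(k)}(t)\to 0\}\imp\{\XX'(t)\to 0\}$ whenever $k>N-\tfrac{N}{2p}-1$. Setting $k=1$ gives exactly $p<\frac12+\frac{1}{N-2}$. The mechanism is a monotonicity argument for $x_{(k+1)}(t)$ once $x_{(k)}(t)$ is small (Claim~\ref{claimcriticalp} and Lemma~\ref{lemintN2p}), iterated up to $k=N-1$. Your sketch gestures at this iteration via the threshold computation, but routing it through a Lyapunov function on an event is both harder and unnecessary.
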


\begin{proof}
(a) Since $\zeta$ has full support on $[0,1]$ it follows that (see~\cite{KV}, Proposition~1) there exists a function $f:\R^+\to\R^+$ such that 
\begin{align}\label{eqfullsupp}
\P(\zeta\in (a,b))\ge f(b-a)>0\qquad\text{for all }0\le a<b\le 1.
\end{align}
Also, to simplify notations, we write $\mu=\mu_N(\XX(t))$ throughout the proof.

Fix a small positive $\eps$ such that $p+2\eps<1$. Suppose that for some $t$ we have $x_{(N-1)}(t)\le b\le 1$. We will show that $x_{(N-1)}(t+N)\le b(1-\eps)$ with a strictly positive probability which only depends on $p,b,\epsilon$ and $N$.
Assume that we have $\zeta_{t+1},\dots,\zeta_{t+N-1}\in\left(pb,(p+\eps)b \right)\subset (pb,b)$; this happens with probability no less than $\left[f(p\eps b)\right]^{N-1}$. We claim that by the time $t+N$ we have $x_{(N-1)}(t+N-1)<(p+\eps)b$. Indeed, $p\mu\le pb$ always lies to the left of the newly sampled points, therefore either there are no more points to the right of $(p+\eps)b$ at some time $s\in [t,t+N-1]$ (which implies that there will be no points there at time $t+N$ due to the sampling range of the new points), or one of the older points, i.e.\ present at time $t$, gets removed (it can be the one to the left of~$pb$). Since we eventually have to replace all the $N-1$ old points, then $x_{(N-1)}(t+N)\le b(1-\eps)$.

Fix a $\delta>0$ and find $M$ so large that $(1-\eps)^M<\delta$. Let the event $C(s)=\{x_{(N-1)}(s) <\delta\}$. By iterating the above argument, we get that $\P(C(t+NM)\| \F_t) \ge \prod_{i=1}^M \left[f(p\eps(1-\eps)^{i-1})\right]^{N-1}$, since at time $t$ we can set $b=1$. Therefore, $\sum_m \P(C(NM(m+1))\| \F_{NMm})=\infty$ and by L\'evy's extension of the Borel-Cantelli lemma (see e.g.~\cite{DW}) infinitely many $C(s)$ occur. Since $\delta>0$ is arbitrary, we get $\liminf_{t\to\infty} x_{(N-1)}(t)=0$. 
\\[5mm]
(b)
Let $r=\frac {1+p^{-1}}{2}>1$. Suppose that the core converges to some point $x\in (0,1]$; then there exist a rational $q\in (0,1]$ and a $T>0$ such that $\XX'(t)\in(q,rq)$ for all $t\ge T$, formally
\begin{align}\label{eqqq}
\{\exists \lim \XX'(t)\in (0,1]\} \subseteq \bigcup_{q\in Q\cap (0,1]} \bigcup_{T>0} \bigcap_{t\ge T} A_{q,t}
\end{align}
where $A_{q,t}=\{\XX'(t)\in(q,rq)\}$.
We will show that
$$
\P(A_{q,t+1}\| \F_t,A_{q,t})<1-\nu_q\quad\text{for all }t
$$
for some $\nu_q>0$. This will imply, in turn, that
$$
\P\left(\bigcap_{t\ge T} A_{q,t}\right)=0
$$
and hence the RHS (and thus the LHS as well) of~\eqref{eqqq} has the probability $0$.

Suppose $A_{q,t}$ has occurred and the newly sampled point $\zeta\in(pq,q)$. Then
\begin{align*}
p\mu_N(\XX'(\tau_k)\cup\{\zeta\})
&< p rq=\frac{pq+q}{2}<\frac{\zeta+x_{(N-1)}}{2} 
\end{align*}
Consequently, $x_{(N-1)}$ lies further from the $p-$center of mass, and hence it should be removed. The new configuration will, however, contain the point $\zeta\notin(q,rq)$ and hence $A_{q,t+1}$ does not occur.
Thus 
$$
\P\left( A_{q,t+1} \| \F_t,A_{q,t} \right)\le 
1-\P\left( \zeta\in(q,rq) \right)\le 1- f(pq-q)=:1-\nu_q
$$
as required.
\\[5mm]
(c)
First, we will show that it is the right-most point of the configuration which should be always removed; note that it suffices to check this only when $x_{(N)}>0$.
Indeed, by the assumption on~$p$ we have
$$
\mu\le  \frac{(N-1)x_{(1)}+(N-1)x_{(N)}}{N}=\frac{2p(N-1)}{N}
\cdot\frac{x_{(1)}+x_{(N)}}{2p}
<\frac{x_{(1)}+x_{(N)}}{2p}
$$
implying that
$$
x_{(N)}-p\mu>p\mu-x_{(1)}
\Longleftrightarrow
x_{(N)}-p\mu>|p\mu-x_{(1)}|
$$
Therefore, $x_{(N)}$ is the farthest point from the $p-$centre of mass. This implies that $x_{(N-1)}(t)$ is non-increasing and therefore result now easily follows from part (a) since $x_{(N-1)}(t)$ is an upper bound for all the core points.
\\[5mm]
(d)
Apply Corollary~\ref{corintN2p}  with $k=1$; it is possible because of Remark~\ref{extraplessone}.
\end{proof}

We are ready to present the main result of this Section.
\begin{thm}\label{thmplessone}
Suppose that $\zeta\sim U[0,1]$. Then $\XX'(t)\to 0$ a.s.
\end{thm}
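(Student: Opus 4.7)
The plan is to combine parts~(a) and~(b) of Proposition~\ref{liminfzero} with a Lyapunov-function argument. Part~(b) ensures that on the event that $\XX'(t)$ converges, the limit equals~$0$ a.s.; part~(a) gives $\liminf_{t\to\infty}x_{(N-1)}(t)=0$ a.s., and since $x_{(N-1)}$ dominates every coordinate of the core, it remains to rule out the oscillatory alternative $\limsup_{t\to\infty}x_{(N-1)}(t)>0$. The range $p<\tfrac12+\tfrac{1}{2(N-1)}$ is already handled by part~(c), so only $\tfrac12+\tfrac{1}{2(N-1)}\le p<1$ calls for a new argument.

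To exclude oscillation I would look for a nonnegative continuous function $\Phi:[0,1]^{N-1}\to[0,\infty)$ with the following properties:
\begin{itemize}
\item[(P1)] $\Phi(\xi)=0$ iff $\xi=(0,\dots,0)$, and moreover $\Phi(\xi)\ge c\,\xi_{(N-1)}$ for some constant $c>0$, so that $\Phi(\XX'(t))\to 0$ forces $\XX'(t)\to 0$;
\item[(P2)] $\Phi(\XX'(t))$ is a nonnegative supermartingale with respect to $\F_t$, i.e.\ $\E[\Phi(\XX'(t+1))\mid\F_t]\le\Phi(\XX'(t))$ for every $t\ge 0$.
\end{itemize}
Granted such a $\Phi$, Doob's supermartingale convergence theorem yields $\Phi(\XX'(t))\to\Phi_\infty$ a.s.\ for some finite $\Phi_\infty\ge 0$. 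By part~(a), along a (random) subsequence $\XX'(t)\to(0,\dots,0)$, so continuity of $\Phi$ at the origin forces $\Phi_\infty=0$, and then (P1) gives $\XX'(t)\to 0$ a.s., as desired.

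The main obstacle, and what the abstract flags as the ``unwieldy'' part of the paper, is the explicit construction of $\Phi$ and the verification of (P2). The function modelled on the radius of gyration $\sum_i(x_i-\mu)^2$ used for $p=1$ in~\cite{GVW,KV} ceases to be a supermartingale when $p<1$: because the $p$-centre of mass is biased towards~$0$, it becomes possible for the old $x_{(N-1)}(t)$ rather than the freshly sampled $\zeta$ to survive, so that the second moment can increase in expectation. A plausible substitute is a weighted power sum of the sorted core, $\Phi(\xi)=\sum_{i=1}^{N-1}w_i\,\xi_{(i)}^{\alpha}$, with exponent $\alpha=\alpha(p,N)\in(0,1]$ and weights $w_i$ tuned so that the expected gains and losses balance on the whole simplex. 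Verifying (P2) then reduces, since $\zeta\sim U[0,1]$, to splitting the integral $\int_0^1\Phi(\cdot)\,\rmd\zeta$ along the configuration-dependent thresholds at which the identity of the extreme point of $\XX'(t)\cup\{\zeta\}$ switches between $\zeta$, the old $x_{(1)}(t)$, and the old $x_{(N-1)}(t)$, and showing that the resulting drift inequality holds uniformly over $\{0\le\xi_{(1)}\le\dots\le\xi_{(N-1)}\le 1\}$.

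The principal difficulty is therefore analytic rather than probabilistic: one must pin down the parameters $\alpha$ and $w_i$ so that, after piecewise integration over $\zeta$, the drift of $\Phi$ is nonpositive uniformly in the configuration and uniformly in $p\in[\tfrac12+\tfrac{1}{2(N-1)},1)$. Once this supermartingale is in hand, the soft convergence argument sketched above closes the theorem.
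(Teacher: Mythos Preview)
Your high-level plan matches the paper's: reduce to $p\ge\tfrac12+\tfrac1{2(N-1)}$ via Proposition~\ref{liminfzero}(c), exhibit a nonnegative function $\Phi$ with $\Phi(\XX'(t))$ a supermartingale and $\Phi^{-1}(0)=\{0\}$, and close via Doob together with the subsequence supplied by part~(a). Two concrete differences deserve comment.

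First, the Lyapunov function actually used is not a weighted power sum of order statistics but the explicit quadratic
\[
h(y)=\sum_{i=1}^{N-1}\bigl(y_i-\mu(y)\bigr)^2+\frac{(N-1)^2(1-p)}{N-2}\,\mu(y)^2,
\]
i.e.\ the radius of gyration plus a specific multiple of the squared barycentre; the additive $k\mu^2$ term is precisely the correction that restores the supermartingale property lost by the bare radius of gyration when $p<1$.

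Second, and this is an ingredient missing from your plan, the paper does \emph{not} verify (P2) for the original process on all of $[0,1]^{N-1}$. It introduces instead a \emph{borderless} auxiliary process in which the new point is sampled uniformly on $[0,6x_{(N-1)}(t)]$ rather than on $[0,1]$. This process is exactly scale invariant, so after normalizing $\max y=1$ the drift of $h$ reduces to a single (lengthy) polynomial inequality in $(a,\mu,p,M)$, independent of the actual scale of the configuration; this is Lemma~\ref{lemMod}, verified in the Appendix. The conclusion is then transferred back by a restart--coupling argument: wait until $x_{(N-1)}<1/L$ (a.s.\ finite by part~(a)), start a borderless process there, use optional stopping to bound the probability that its core escapes above $1/2$ by $O(L^{-2})$, and observe that on the complementary event the two cores coincide and tend to zero. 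Your cleaner ``global (P2) on $[0,1]^{N-1}$ plus subsequence'' endgame would work if it could be established, but verifying it directly forfeits the scale invariance that makes the drift computation tractable; the borderless device is exactly the paper's workaround for this.
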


\begin{proof}
Proposition~\ref{liminfzero} (c) implies that we now only need to consider the case $p\ge \frac{N}{2(N-1)}$, which we will assume from now on.

Let us introduce a modification of this process on $[0,+\infty)$ which we will call the {\it borderless $p$-contest}; it is essentially the same process as the one in Section~3.4 of~\cite{GVW}.  In order to do this, we need the following statement.

\begin{lemma}\label{lemborderless}
Suppose that $x_1,\dots,x_{N-1}> 0$. Then there exists an $R=R(x_{(N-1)})\ge 0$ such that $x$ is the farthest point from $p\mu=\frac{p}{N}(x_1+\dots+x_{N-1}+x)$ whenever $x>R$.
\end{lemma}
\begin{proof}[Proof of Lemma~\ref{lemborderless}.]
Set $R=6 x_{(N-1)}$. Then $x>x_{(1)}$ is farther from the centre of mass than $x_{(1)}$ if and only if
$$
x-p\mu>|p\mu-x_{(1)}|
\Longleftrightarrow
x-p\mu>p\mu-x_{(1)}
\Longleftrightarrow
x\left(1-\frac{2p}N\right)>2p\frac{x_1+\dots+x_{N-1}}{N}-x_{(1)}
$$
This is true, due to the fact that $x>R$ and 
$$
x\left(1-\frac{2p}N\right)>\frac{x}3>2x_{(N-1)}>2p x_{(N-1)}
>2p\frac{x_1+\dots+x_{N-1}}{N}
$$
since $p<1$ and $N\ge 3$.
\end{proof}

The borderless process is constructed as follows. Our core configuration starts as before in $[0,1]$, and we use the same rejection/acceptance criteria for new points. However, we will now allow points to be generated to the right of $1$ as well. Let $R_t=R(x_{(N-1)}(t))$ where $R$ is taken from Lemma~\ref{lemborderless}. Then a new point is sampled uniformly and independently of the past on the interval $[0,R_t]$; formally, it is given by $R_t U_t$ where $U_t$ are i.i.d.\ uniform $[0,1]$ random variables independent of everything. Observe that if we consider the embedded process only at the times when the core configuration changes, then the exact form of the function $R(\cdot)$ is irrelevant, due to the fact that the uniform distribution conditioned on a subinterval is also uniform on that subinterval.

Next, for $y=\{y_1,\dots,y_{N-1}\}$ define the function
\begin{align}\label{Lyap}
h(y)=F(y)+k\mu(y)^2,
\end{align}
where 
$$
F(y)=\sum_{i=1}^{N-1} (y_i-\mu(y))^2,
\quad \mu(y)=\frac{1}{N-1} \sum_{i=1}^{N-1} y_i,
\quad k=\frac{(N-1)^2(1-p)}{N-2}.
$$

We continue with the following
\begin{lemma}\label{lemMod}
For the borderless $p$-contest the sequence of random variables $h\left(\XX'(t)\right)\ge 0$, $t=1,2,\dots$, is a supermartingale.
\end{lemma}
\begin{rema}
Note that the function $F(\cdot)$ defined above is a Lyapunov function for the process in~\cite{GVW}; this is no longer the case as long as $p\ne 1$; that is why we have to use a carefully chosen ``correction" factor which involves the barycentre of the configuration.
\end{rema}

\begin{proof}[Proof of Lemma~\ref{lemMod}]
Assume that $x_{(N-1)}(t)>0$ (otherwise the process has already stopped, and the result is trivial). The inequality, which we want to obtain
is 
$$
\left. \E[h(\XX'(t+1))-h(\XX'(t))\| \F_t] \right|_{x(t)=y}\le 0
$$
for all $y=(y_1,\dots,y_{N-1})$ with $y_i \in [0,1]$.
Note that the function $h(y)$ is homogeneous of degree~$2$ in~$y$, therefore w.l.o.g.\ we can assume that $\max y\equiv 1$.

For simplicity let $M=N-1\ge 2$, and let
$$
z=6U_t\text{ (the newly sampled point)},\quad a=\min y<1 \text{ (the leftmost point)}
$$
Note also that
\begin{align}\label{pcond}p\ge \frac{N}{2(N-1)}=\frac{M+1}{2M}=\frac12 +\frac 1{2M}.
\end{align}
Define
\begin{equation*}
\begin{array}{rlrl}
F_{old}&=F(y), &
F_{new}&=F\left((y\cup\{z\})' \right)\\
\mu'_{old} &=\mu(y),& 
\mu'_{new} &=\mu\left((y\cup\{z\})' \right),\\
h_{old}&=F_{old}+k\left(\mu_{old}'\right)^2 , &
h_{new}&=F_{new}+k \left(\mu_{new}'\right)^2
\end{array}
\end{equation*}  
Thus we need to establish 
\begin{align}\label{eqsnos}
\E[h_{new}-h_{old}\|\F_t]\le 0.
\end{align}

First of all, observe that if $\tilde y=(y\setminus \{y_i\})\cup \{z\}$, that is, $\tilde y$ is obtained from $y$ by replacing $y_i$ with $y_0$, then
\begin{align*}
F(\tilde y)-F(y)&=\frac{z-y_i}{M}  
\left[(M-1)z+(M+1)y_i -2M \mu(y)\right] \\
\mu(\tilde y)^2-\mu(y)^2&=\frac{z-y_i}{M^2}  
\left[z-y_i +2M \mu(y)\right] 
\end{align*}
In particular, if we replace point $a$ by the new point $z$, then
$$
\Delta_a(z):=h_{new}-h_{old}=
\frac{z-a}{M}  
\left[(M-1)z+(M+1)a -2M \mu(y)  
+\frac{k}M   (z-a +2M \mu(y))\right] 
$$
and if we replace point $1$, then
$$
\Delta_1(z):=h_{new}-h_{old}=
\frac{z-1}{M}  
\left[(M-1)z+(M+1) -2M \mu(y)  
+\frac{k}M (z-1 +2M \mu(y))\right] 
$$
Note that both $\Delta_a$ and $\Delta_1$ depend only on four variables $(a,z,\mu,M)$ but not the whole configuration.
Let us also define 
$$ m(z)=p\cdot \frac{y_1+\dots+y_M+z}{M+1}=p\cdot \frac{M\mu+z}{M+1},
$$
the $p-$centre of mass of the old core and the newly sampled point.

There are three different cases that can occur: either (a) the point $a$ is removed, (b) $1$, the rightmost point of the previous core, is removed, or (c) the newly sampled point $z$ is removed. In the third case the core remains unchanged, and the change in the value of the function $h$ is trivially zero.
The point $a$ can only be removed if $z>a$; the point $1$ can only be removed if $z<1$; the point $z$ can be possibly removed only if $z\in(0,a)$ or $z\in(1,\infty)$.  Let us compute the critical values for~$z$, for which there is a tie between the farthest points.

\subsubsection*{Which point to remove?}
\noindent $(i)$ 
Suppose $\boxed{z<a}$.
Then there is a tie between~$z$ and~$1$ if and only if $m(z)=\frac{z+1}2$, that is if 
$$
z=t_{z1}:=\frac{M(2p\mu -1)-1}{M+1-2p}
\in
\begin{cases}
(-\infty,0) &\text{ if }p<p_1:=\frac{M+1}{2M\mu}\\
(0,a) &\text{ if }p_1<p<p_2:=\frac{(M+1)(a+1)}{2M\mu+2a}\\
(a,+\infty) &\text{ if }p>p_2.
\end{cases}
$$
Thus,  we have:
\begin{itemize}
\item when $p<p_1$,  point $1$ is removed;
\item when $p_1<p<p_2$, if $z<t_{z1}$ then $z$ is removed; if $z>t_{z1}$ point $1$ is removed;
\item when $p>p_2$, point $z$ is removed.
\end{itemize}

\noindent $(ii)$ 
Suppose $\boxed{a<z<1}$.
There is a tie between~$a$ and~$1$ if and only if $m(z)=\frac{a+1}2$, that is if 
$$
z=t_{a1}:=\frac{(M+1)(a+1)-2M\mu p}{2p}
\in
\begin{cases}
(1,+\infty) &\text{ if }p<p_3:=\frac{(M+1)(a+1)}{2M\mu+2},\\
(a,1) &\text{ if }p_3<p<p_2,\\
(-\infty,a) &\text{ if }p>p_2.
\end{cases}
$$
Thus,  we have:
\begin{itemize}
\item when $p<p_3$,  point $1$ is removed;
\item when $p_3<p<p_2$, if $z<t_{a1}$ then $1$ is removed; if $z>t_{a1}$ then point $a$ is removed;
\item when $p>p_2$, point $a$ is removed.
\end{itemize}

\noindent $(iii)$ 
Suppose $\boxed{z>1}$.
There is a tie between~$z$ and~$a$  if and only if $m(z)=\frac{z+a}2$, that is if 
$$
z=t_{za}:=\frac{2M\mu p-(M+1)a}{M+1-2p}
\in
\begin{cases}
(-\infty,1) &\text{ if }p<p_3,\\
(1,+\infty) &\text{ if }p>p_3.
\end{cases}
$$
Thus,  we have:
\begin{itemize}
\item when $p<p_3$,  point $z$ is removed;
\item when $p>p_3$, if $z<t_{za}$ then $a$ is removed; if $z>t_{za}$ then  point $z$ is removed.
\end{itemize}

We always have $p_1<p_2$, $p_3<p_2$ since
\begin{align*}
p_2-p_1&=\frac{a(M+1)(M\mu-1)}{2M\mu(M\mu+a)}=\frac{a(M+1)(a+(M-2)f)}{2M\mu(M\mu+a)}>0,\\
p_2-p_3&=\frac{(1-a)^2 (M+1)}{2(M\mu+1)(M\mu+a)}>0,
\end{align*}
while 
$$
p_1<p_3\  \Longleftrightarrow\  Ma\mu>1
 \Longleftrightarrow\  
 f>
\frac{1-a- a^2(M-1) }{a(M-2)(1-a) }
 \text{ (when $M>2$)}
$$

The final observation is that $t_{za}<6$, so there is indeed no need to sample the new point outside of the range $(0,6)$; this holds since $M\ge 2$ and
\begin{align*}
6-t_{za}&=\frac{-2 p (M\mu+6) +M a+6 M+a+6}{M+1-2p}
>\frac{-2 M\mu +M a+6 M+a-6}{M+1-2p}
\\
&
>\frac{-2 M\mu +6 M-6}{M+1-2p}
=\frac{2M(1- \mu)+4M-6}{M+1-2p}>
\frac2{M+1-2p}>0.
\end{align*}

\subsection*{The five cases for the removal:}
\begin{itemize}
 \item $p<\min\{p_1,p_3\}$:
 \begin{itemize}
  \item when $z<1$, point $1$ is removed
  \item when $z>1$, point $z$ is removed
 \end{itemize}

 \item $p>p_2$:
 \begin{itemize}
  \item when $z<a$ or $z>t_{za}\in(1,\infty)$ point $z$ is removed
  \item when $a<z<t_{za}$, point $a$ is removed
 \end{itemize}

 \item $\max\{p_1,p_3\}<p<p_2$
 \begin{itemize}
  \item when $z<t_{z1}\in(0,a)$ or $t>t_{za}\in(1,+\infty)$, point $z$ is removed 
  \item when $t_{z1}<z<t_{a1}\in(a,1)$, point $1$ is removed
  \item when $t_{a1}<z<t_{za}$, point $a$ is removed
 \end{itemize}

 \item $p_1<p<p_3\ (<p_2)$:
 \begin{itemize}
  \item when $z<t_{z1}\in(0,a)$ or $z>1$, point $z$ is removed
  \item when $t_{z1}<z<1$, point $1$ is removed
 \end{itemize}

 \item $p_3<p<p_1\ (<p_2)$:
 \begin{itemize}
  \item when $z<t_{a1}\in(a,1)$, point $1$ is removed 
  \item when $t_{a1}<z<t_{za}\in(1,+\infty)$, point $a$ is removed
  \item when $z>t_{za}$, point $z$ is removed
 \end{itemize}

\end{itemize}

Let 
\begin{align*}
X_1&=p-p_1=\frac{M(2\mu p-1)-1}{2M \mu},\\
X_2&=p-p_2=\frac{2ap-a-1+(2\mu p-a-1)M}{2(M\mu+a)},\\
X_3&=p-p_3=\frac{2p-a-1+ (2\mu p-a-1)M }{2(M\mu+1)}.
\end{align*}
Define
\begin{align*}
\tilde{\bf I}_1&=\left.  \E(h(\XX'(t+1))-h(\XX'(t))\|\F_t) \right|_{ x(t)=y} \cdot 1_{X_1<0} \cdot 1_{X_3<0} ,
\\  \tilde{\bf I}_2&=\left. \E(h(\XX'(t+1))-h(\XX'(t))\|\F_t) \right|_{ x(t)=y}  \cdot 1_{X_2>0} ,
\\  \tilde{\bf I}_3&\left.= \E(h(\XX'(t+1))-h(\XX'(t))\| \F_t) \right|_{ x(t)=y}  \cdot 1_{X_2<0} \cdot 1_{X_1>0} \cdot 1_{X_3>0},
\\ \tilde{\bf I}_4&= \left. \E(h(\XX'(t+1))-h(\XX'(t))\|\F_t) \right|_{ x(t)=y}  \cdot 1_{X_1>0} \cdot 1_{X_3<0},
\\  \tilde{\bf I}_5&= \left.\E(h(\XX'(t+1))-h(\XX'(t))\| \F_t) \right|_{ x(t)=y}  \cdot 1_{X_1<0} \cdot 1_{X_3>0}.
\end{align*}
Since $\max y=1$, because of the comment on the restriction of the uniform distribution on a subinterval, we have $\tilde{\bf I}_j=c_j {\bf I}_j$, $j=1,2,3,4,5$, where $c_j$'s are some positive constants and
\begin{align*}
\begin{array}{rclrcl}
{\bf I}_1&=& {\bf A}_1 \cdot 1_{X_1<0} \cdot 1_{X_3<0}, 
&
{\bf A}_1&=& \displaystyle\int_0^1 \Delta_1 dz  ,
\\
{\bf I}_2&=& {\bf A}_2 \cdot 1_{X_2>0}, 
& 
{\bf A}_2&=&\displaystyle\int_a^{t_{za}} \Delta_a dz ,
\\
{\bf I}_3&=& {\bf A}_3 \cdot 1_{X_2<0} \cdot 1_{X_1>0} \cdot 1_{X_3>0},
&
{\bf A}_3&=&\displaystyle\int_{t_{z1}}^{t_{a1}} \Delta_1 dz + \int_{t_{a1}}^{t_{za}} \Delta_a dz,  
\\
{\bf I}_4&=&{\bf A}_4  \cdot 1_{X_1>0} \cdot 1_{X_3<0},
&
{\bf A}_4&=& \displaystyle\int_{t_{z1}}^{1} \Delta_1 dz,
\\
{\bf I}_5&=&  {\bf A}_5\cdot 1_{X_1<0} \cdot 1_{X_3>0},
&
{\bf A}_5&=&\displaystyle\int_{0}^{t_{a1}} \Delta_1 dz + \int_{t_{a1}}^{t_{za}} \Delta_a dz. 
\end{array}
\end{align*}
Thus to establish~\eqref{eqsnos}, it suffices to show that ${\bf I}_j\le 0$ for each $j=1,2,3,4,5$. This is done by very extensive and tedious calculations, which can be found in the Appendix.
\end{proof}

We now return to our original $p$-contest process $\XX(t)$. For $L\ge 2$ define
\begin{align*}
\tau_L&=\inf\{t>0:\ x_{(N-1)}(t)<1/L\};\\
\eta_L&=\inf\{t>\tau_L:\ x_{(N-1)}(t)\ge 1/2\},
\end{align*} 
note that $\tau_L$  is a.s.\ finite for every $L$ by Proposition~\ref{liminfzero}. Let $W(s)=\{w_1(s),\dots,w_N(s)\}$ be a borderless $p$-contest with $W(0)=\XX(\tau_L)$; let $W'(s)$ be its core.
By Lemma~\ref{lemMod} the quantity $\xi_t=h(W'(t\wedge \eta_L))$ is a supermartingale,  that converges to some $\xi_\infty$.
Since $\xi_t$ is bounded,
\begin{align*}
\E \xi_0\ge \E \xi_\infty=\E [\xi_\infty\cdot  1_{\eta_L<\infty}]
+\E [\xi_\infty\cdot 1_{\eta_L=\infty}]
\ge \E [\xi_\infty\cdot 1_{\eta_L<\infty}]
\ge \frac{k}{(2(N-1))^2}\P(\eta_L<\infty)
\end{align*}
since on $\{\eta_L<\infty\}$ we have $\xi_\infty=W'( \eta_L)$  and the largest coordinate of $W'( \eta_L)$ is larger than $1/2$, implying that $\mu(W'(\eta_L))\ge \frac 1{2(N-1)}$ and thus $h(W'(\eta_L))=F(W'(\eta_L))+k\mu(W'(\eta_L))^2 \ge \frac k{(2(N-1))^2}$.
We also have
$$
\xi_0=h(\XX'(\tau_L))=F(\XX'(\tau_L))+k\mu(\XX'(\tau_L))^2 \le \frac{N-1}{L^2} +\frac{k}{L^2}
\quad
\Longrightarrow
\quad
\E \xi_0\le \frac{N+k-1}{L^2}
$$
since $\XX'(\tau_L)\subset [0,1/L]$ and so $\mu(\XX'(\tau_L))\in [0,1/L]$.

Combining the above inequalities, we conclude that $\P(\eta_L<\infty)\to 0$ as $L\to\infty$.
However, on $\eta_L=\infty$ the core of the regular $p$-contest process can be trivially coupled with the core of the borderless process $W'(s)$ which converges to zero, so  $\XX'(t)\to 0$ as well. Since $\P(\eta_L=\infty)$ can be made arbitrarily close to $1$ by choosing a large $L$, we conclude that $\XX'(t)\to 0$ a.s.
\end{proof}

\section{The case $p>1$}
Throughout this section  we suppose that $\zeta$ has a full support on $[0,1]$, and, unless explicitly stated otherwise, that $p>1$.

\begin{thm}\label{thmpgreater}
\begin{itemize}
\item[(a)]
$
 \P\left(\{ \XX'(t)\to 0\}\cup \{ \XX'(t)\to 1\} \right)=1;
$
\item[(b)]
if $x_{(1)}(0)\ge 1/p$ then  $\P( \XX'(t)\to 1)=1$;
\item[(c)]
if $x_{(k)}(0)>0$, where $k$ satisfies
\begin{align}\label{eqksmall}
\left\{2p(N-k)>N-2p \right\}\Longleftrightarrow \left\{k<N-\frac{N}{2p}+1\right\},
\end{align}
then $ \P\left( \XX'(t)\to 1 \right)>0.$
\end{itemize}

\end{thm}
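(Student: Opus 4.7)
The plan is to establish two facts. First, that no subsequential limit of the core lies in $(0,1)$: this follows by adapting the argument of Proposition~\ref{liminfzero}(b). If the core lay eventually in some interval $(q-r,q+r)\subset(0,1)$, a sample $\zeta$ in a carefully chosen sub-interval would, with probability bounded below by some $\nu_{q,r}>0$, dislodge a core point and force the core out of the interval; Borel--Cantelli then yields a contradiction. Second, that the core converges almost surely: this requires exhibiting a Lyapunov function vanishing at both attractors $0$ and $1$, for example $h(y)=F(y)+k\,\mu(y)(1-\mu(y))$ for an appropriate constant $k>0$, and verifying its supermartingale property through case analysis analogous to Lemma~\ref{lemMod}. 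Almost-sure convergence of this non-negative supermartingale to $0$ then forces $F(\XX'(t))\to 0$ and $\mu(\XX'(t))\in\{0,1\}$ a.s.

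\textbf{Part (b).} I would prove by induction the invariant $\XX'(t)\subseteq[1/p,1]$, which combined with part~(a) yields $\XX'(t)\to 1$. Given the invariant at time $t$, consider $\XX(t+1)=\XX'(t)\cup\{\zeta_{t+1}\}$. If $\zeta_{t+1}\ge 1/p$, then $\mu_N(\XX(t+1))\ge 1/p$, so $p\mu_N(\XX(t+1))\ge 1\ge x_{(N)}(t+1)$, and the leftmost point (lying in $[1/p,1]$) is the extreme; its removal preserves the invariant. If $\zeta_{t+1}<1/p$, a short calculation using $p>1$, $N\ge 3$, and the fact that the $N-1$ core points lie in $[1/p,1]$ shows that $2p\mu_N(\XX(t+1))>\zeta_{t+1}+y$ for every other point $y$ of $\XX(t+1)$, so $\zeta_{t+1}$ is the extreme and is discarded, leaving $\XX'(t+1)=\XX'(t)\subseteq[1/p,1]$. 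By part~(a) the core converges to $0$ or $1$; the invariant rules out the former.

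\textbf{Part (c).} The condition $2p(N-k+1)>N$ is precisely what ensures that in a configuration with $N-k+1$ points at a common positive value $\alpha$ and the remaining $k-1$ at $0$, the leftmost is the extreme (via the inequality $2p\mu_N(\XX)>x_{(1)}+x_{(N)}$). I would construct a positive-probability event on which consecutive samples $\zeta_t$ fall in carefully chosen small intervals (initially just above the current $x_{(1)}(t)$, eventually close to $1$), so that at each step the leftmost of $\XX(t+1)$ is the extreme and is replaced by a higher value. This monotonically boosts $x_{(1)}$ of the core over finitely many steps, ultimately driving $\XX'(t)\subseteq[1/p,1]$; part~(b) then yields $\XX'(t)\to 1$ on this event. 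Preservation of the leftmost-is-extreme property along the controlled trajectory follows from the monotone improvement of the configuration (the count of high-valued points and the running minimum both non-decreasing) together with the condition of (c).

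\textbf{Main obstacle.} The principal technical challenge lies in part~(a), specifically in identifying a valid Lyapunov function and verifying its supermartingale property; this is expected to require case analysis as extensive as that of Lemma~\ref{lemMod} and its appendix. Parts (b) and (c) are essentially routine given (a), though (c) requires care in selecting the controlling neighborhoods so that the leftmost-is-extreme property is maintained throughout the controlled trajectory.
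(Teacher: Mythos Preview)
Your approaches to parts (b) and (c) are essentially those of the paper (Lemma~\ref{lem_all_above_p}, Lemma~\ref{lem_send_to_above_p}, and the beginning of Lemma~\ref{dragto1Lemma}); these pieces are fine.

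For part (a), however, your plan diverges substantially from the paper and contains a real gap. The paper does \emph{not} use a Lyapunov function for (a). Instead it identifies a ``ruling'' order statistic: one chooses an integer $k$ with
\[
N-\frac{N}{2p}-1<k<N-\frac{N}{2p}+1,
\]
so that $k$ satisfies both~\eqref{eqksmall} and~\eqref{eqkspecial}. The argument then splits on whether $x_{(k)}(t)\to 0$. If yes, Corollary~\ref{corintN2p} (which only needs~\eqref{eqkspecial}) gives $\XX'(t)\to 0$; if no, Lemma~\ref{dragto1Lemma} (which only needs~\eqref{eqksmall}) gives $\XX'(t)\to 1$. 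The case $p\ge N/2$ is handled separately and trivially (Lemma~\ref{lempN2}). No supermartingale is needed, and the whole proof of (a) is a few lines once these lemmas are in place; the genuine work is in Claim~\ref{claimcriticalp} and Lemma~\ref{lemintN2p}, which are elementary monotonicity/Borel--Cantelli arguments, not an Appendix-scale case analysis.

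Your proposed route has two problems. First, you assert that the non-negative supermartingale $h(\XX'(t))$ converges to~$0$ almost surely; a non-negative supermartingale converges a.s., but not in general to~$0$, so this step is unjustified as written (compare how the paper uses its supermartingale in Theorem~\ref{thmplessone}: not convergence to~$0$, but optional stopping to bound exit probabilities). Second, the supermartingale property of $h(y)=F(y)+k\,\mu(y)(1-\mu(y))$ is pure conjecture; you offer no evidence that such a $k$ exists, and the $p<1$ Appendix does not transfer since the geometry (which endpoint is removed, the absence of a borderless coupling on the right) is different here. So your ``main obstacle'' is not merely technical but may be an actual dead end, whereas the paper's order-statistic dichotomy sidesteps it entirely.
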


\begin{rema}
In general, both convergences can have a positive probability. Let $N=3$, $p\in(1,3/2)$, and
$$
\zeta=\begin{cases}
U,&\text{ with probability }1/3;\\
0,&\text{ with probability }1/3;\\
1,&\text{ with probability }1/3,
\end{cases}
$$
where $U\in U[0,1]$ (so $\zeta$ has full support). Suppose we sample the points of $\XX(0)$ from~$\zeta$.
If they all start off in $0$,  then $p\mu \le  p/3 < 1/2$, so they cannot escape from $0$. On the other hand, there is a positive probability they all start in $(1/p, 1]$, and then Theorem 2(b) says that they converge to $1$.
\end{rema}

The key idea behind the proof of Theorem~\ref{thmpgreater} is that one can actually find the ``ruling" order statistic of the core; namely, there exists some non-random $k=k(N,p)\in\{1,2,\dots, N-1\}$ such that $x_{(k)}(t)\to 0$  implies $\XX'(t)\as 0$, while $x_{(k)}(t)\not\to 0 $  implies that $\XX'(t)\as 1$.

We start with the following two results, which tells us that there is {\em an absorbing area}  $[\frac{1}{p},1]$ for the process, such that, once the core enters this area, it will never leave it, and moreover the core will keep moving to the right.
\begin{claim}\label{absorbing}
Suppose that  $x_1\le x_2\le x_3\le \dots\le x_{N}\le 1$  and $x_2\ge p^{-1}$. Then  $\{x_1,\cdots,x_N\}'=\{x_2,\cdots,x_N\}$
\end{claim}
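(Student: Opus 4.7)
The plan is to show that, under the hypotheses, $x_1$ is strictly the farthest point from the $p$-centre of mass $p\mu$, where $\mu=N^{-1}(x_1+\dots+x_N)$. Once this is established, by definition of the core we get $\{x_1,\dots,x_N\}'=\{x_2,\dots,x_N\}$.

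First I would check that $x_1\le p\mu$, so that among the $x_i$'s the only candidate that could be farther from $p\mu$ than $x_1$ is $x_N$. Using $x_i\ge 1/p$ for $i\ge 2$, one gets
$$
p\mu\ge \frac{px_1+(N-1)}{N},
$$
and the inequality $p\mu\ge x_1$ reduces to $N-1\ge x_1(N-p)$. If $N\le p$ this is immediate since $x_1\ge 0$; if $N>p$, then $\frac{N-1}{N-p}\ge 1$ (as $p>1$) and $x_1\le 1$ finishes it.

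Second, and this is the heart of the computation, I would show $p\mu-x_1>x_N-p\mu$, i.e.\ $2p\mu>x_1+x_N$. Lower-bounding the inner terms $x_2,\dots,x_{N-1}\ge 1/p$ gives
$$
2p\mu\ge \frac{2p}{N}(x_1+x_N)+\frac{2(N-2)}{N},
$$
so it suffices to prove $2(N-2)\ge (x_1+x_N)(N-2p)$. If $N-2p\le 0$ the right-hand side is $\le 0$ and we are done (since $N\ge 3$); otherwise $p>1$ yields $\frac{2(N-2)}{N-2p}>2\ge x_1+x_N$, again with strict inequality.

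I do not anticipate any serious obstacle: this is essentially a two-step estimate with the hypothesis $x_2\ge 1/p$ used to lower-bound the middle coordinates of $\mu$, and the assumption $p>1$ delivering strict inequality so that ties at the boundary are avoided. The only mild care is to treat the sign of $N-2p$ as two cases, and to verify $x_1\le p\mu$ separately so that the left-most point really is the unique extremum rather than $x_N$.
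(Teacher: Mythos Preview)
Your proposal is correct and follows essentially the same route as the paper: both reduce to the inequality $2p\mu>x_1+x_N$, established by lower-bounding $x_2,\dots,x_{N-1}\ge 1/p$ so that $2p(x_2+\dots+x_{N-1})\ge 2(N-2)$ and then comparing with $(N-2p)(x_1+x_N)\le 2(N-2p)<2(N-2)$. The only cosmetic differences are that the paper first disposes of the case $p\mu\ge x_N$ (where $x_1$ is trivially farthest) instead of separately verifying $x_1\le p\mu$, and it leaves the sign split on $N-2p$ implicit; incidentally, your Step~1 can be shortened to the one-liner $p\mu\ge\mu\ge x_1$ since $p>1$.
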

\begin{proof}
Let $\mu=\frac{x_1+\cdots x_N}{N}$. If $p\mu \ge x_N$ then the claim follows immediately; assume  instead that $p\mu < x_N$. We need to check if $p\mu-x_1>x_N-p\mu$, that is, if
\begin{align}\label{eqx2p}
2p(x_2+\cdots+x_{N-1})>(N-2p)(x_1+x_N)
\end{align}
However, since $x_i\ge x_2$ for $i=3,\dots, N-1$ we have
$$
2p(x_2+\cdots+x_{N-1})\ge 2p x_2 (N-2)\ge 2(N-2)
$$
while $(N-2p)(x_1+x_N)\le 2(N-2p)<2(N-2).$
Hence~\eqref{eqx2p} follows.
\end{proof}

\begin{lemma}\label{lem_all_above_p}
If $x_{(1)}(t_0)\ge 1/p$ for some $t_0$, then $\XX'(t)\to 1$ a.s.
\end{lemma}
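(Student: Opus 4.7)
The plan is to show that once the core satisfies $x_{(1)}\ge 1/p$, this property is invariant under the dynamics and that the minimum $x_{(1)}(t)$ is forced monotonically up to~$1$. First I would verify that for every $t\ge t_0$ one has $x_{(1)}(t)\ge 1/p$ and that $t\mapsto x_{(1)}(t)$ is non-decreasing. The point is that $\XX(t+1)=\XX'(t)\cup\{\zeta_{t+1}\}$ contains $N-1$ entries already $\ge 1/p$, so no matter where $\zeta_{t+1}$ lands, the second-smallest element of $\XX(t+1)$ is $\ge 1/p$. Claim~\ref{absorbing} then applies and forces the removed point to be the leftmost element of $\XX(t+1)$. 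A two-case analysis finishes this step: if $\zeta_{t+1}<x_{(1)}(t)$ then $\zeta_{t+1}$ is removed and the core stays the same; if $\zeta_{t+1}\ge x_{(1)}(t)$ then $x_{(1)}(t)$ is removed and replaced by $\zeta_{t+1}$. In both cases the new core minimum is $\ge x_{(1)}(t)$.

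Next, consider the total mass of the core $S(t):=\sum_{i=1}^{N-1}x_{(i)}(t)$. The update rule above gives the clean identity
\[
S(t+1)-S(t) \;=\; \bigl(\zeta_{t+1}-x_{(1)}(t)\bigr)^+ \;\ge\; 0,
\]
so $S(t)$ is bounded by $N-1$ and non-decreasing, hence converges a.s.; in particular $S(t+1)-S(t)\to 0$ a.s.

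Finally, by monotonicity $x_{(1)}(t)$ converges a.s.\ to some $m^\infty\le 1$. Suppose for contradiction that $\P(m^\infty<1)>0$; pick $\eta>0$ with $\P(m^\infty\le 1-2\eta)>0$. Since $\zeta$ has full support, $\P(\zeta\ge 1-\eta)>0$, and the second Borel--Cantelli lemma (applied to the i.i.d.\ events $\{\zeta_{t+1}\ge 1-\eta\}$) gives $\zeta_{t+1}\ge 1-\eta$ for infinitely many $t$ a.s. On $\{m^\infty\le 1-2\eta\}$ we have $x_{(1)}(t)\le m^\infty\le 1-2\eta$ for all $t$, so at those infinitely many $t$ one has $S(t+1)-S(t)\ge\eta$, contradicting $S(t+1)-S(t)\to 0$. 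Hence $m^\infty=1$ a.s., and since $x_{(i)}(t)\in[x_{(1)}(t),1]$ for each $i$, this forces $\XX'(t)\to 1$ a.s. The main conceptual step is observing that Claim~\ref{absorbing} reduces the dynamics to a monotone process on the absorbing set $\{x_{(1)}\ge 1/p\}$; beyond that the argument is routine and presents no real obstacle.
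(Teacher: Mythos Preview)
Your proof is correct and reaches the conclusion by a somewhat different route from the paper's. Both arguments start identically: from $x_{(1)}(t_0)\ge 1/p$ one notes that $\XX(t+1)=\XX'(t)\cup\{\zeta_{t+1}\}$ always has its second-smallest element $\ge 1/p$, so Claim~\ref{absorbing} forces the leftmost point to be discarded, making the interval $[1/p,1]$ absorbing and $x_{(1)}(t)$ non-decreasing. From here the two proofs diverge. The paper waits for the block event $A_s=\{\zeta_{s+1},\dots,\zeta_{s+N-1}\in(1-\eps,1]\}$; when this occurs every old core point is pushed out, so $\XX'(s+N-1)\subset(1-\eps,1]$, and Borel--Cantelli on the independent events $A_s,A_{s+N},\dots$ plus arbitrariness of $\eps$ finishes the argument. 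You instead track the total mass $S(t)=\sum_i x_{(i)}(t)$ and exploit the clean identity $S(t+1)-S(t)=(\zeta_{t+1}-x_{(1)}(t))^+$: since $S$ is bounded and non-decreasing its increments vanish, which is incompatible with $\zeta_{t+1}\ge 1-\eta$ infinitely often while $x_{(1)}(t)\le 1-2\eta$. Your route avoids the block argument and needs only single-step Borel--Cantelli, at the cost of introducing the auxiliary quantity $S(t)$; the paper's route is slightly more hands-on but yields the explicit statement that after a finite random time the core lies entirely in $(1-\eps,1]$. Both are short and fully rigorous.
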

\begin{proof}
If $x_{(1)}(t_0) \ge 1/p$, then any point that lands in $[0, 1/p)$ is extreme, so $x_{(2)}(t)\ge 1/p$ for all $t\ge t_0$.
Choose any positive $\eps<1-\frac 1p$, and let $A_t=\left\{\zeta_{t+1},\dots,\zeta_{t+N-1}\in\left(1-\eps,1\right]\right\}$.
Then if~$A_t$ happens for  $s > t_0$, any point in $\left[0, 1 - \eps\right]$ is removed in preference to any of the new points coming in, so $x_{(2)}(s+N-1) > 1 -\eps$. As a result, by Claim~\ref{absorbing} we get that $\XX'(t)\in \left[0, 1 - \eps\right]$ for all $t\ge s$.

On the other hand,  $\P(A_t)\ge [f(\eps)]^{N-1}>0$ (see~\eqref{eqfullsupp}) for any $t$, and the events $A_t,A_{t+N},A_{t+2N},\dots$ are independent. Hence, eventually with probability $1$,  one of the $A_t$'s must happen for some $t>t_0$, so a.s.\ $\XX'(t)\in \left[0, 1-\eps\right]$ for all large $t$. Since $\eps$ can be chosen arbitrary small, we get the result.
\end{proof}

The next two results show that if the is some $\eps>0$ such that infinitely often the core does not have any points in $[0,\eps)$, then it must, in fact, converge to $1$.
\begin{lemma}\label{lem_send_to_above_p}
If $x_{(1)}(t_0)\ge \eps$ for some $t_0$ and $\eps>0$, then $\P(x_{(1)}(t_0+\ell)\ge p^{-1}\|\F_t)\ge \delta$ for some $\ell=\ell(\eps)$ and $\d=\d(\eps)>0$.
\end{lemma}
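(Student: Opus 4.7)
The plan is to exploit the full-support hypothesis on $\zeta$ to construct, adaptively, a finite sequence of sampling intervals $I_1,\dots,I_\ell$ with $\ell=\ell(\eps)$, each of positive length and measurable with respect to the state at the previous time, so that on the event $\bigcap_{k=1}^\ell\{\zeta_{t_0+k}\in I_k\}$ the deterministic evolution of the process drives $x_{(1)}$ past $1/p$; once this is in place,~\eqref{eqfullsupp} and the Markov property will yield the lower bound $\delta(\eps)$ on the conditional probability in the statement.

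The first ingredient is the characterization: for $\XX=\{x_{(1)}\le\dots\le x_{(N)}\}$ the leftmost point is the extreme iff $p\mu\ge (x_{(1)}+x_{(N)})/2$, equivalently
\[
(2p-N)(x_{(1)}+x_{(N)})+2p\sum_{i=2}^{N-1}x_{(i)}\ge 0,
\]
which is automatic when $2p\ge N$ and, when $2p<N$, demands ``enough middle mass.'' From this criterion, for any core $\XX'$ with minimum $m\ge\eps$, one reads off an explicit open interval $I(\XX')\subset (m,1]$ of length at least $c=c(p,N,\eps)>0$ such that inserting a new sample $\zeta\in I(\XX')$ makes the leftmost the extreme in the enlarged configuration $\XX'\cup\{\zeta\}$; the growth bound
\[
\rho=\rho(p,N)=\frac{2p(N-1)-N}{N-2p}>1\quad(\text{when }2p<N,\text{ otherwise any }\rho>1)
\]
comes out of this calculation applied to the uniform worst case $\XX'=\{m,\dots,m\}$.

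Inductively, starting from any core with minimum $m\ge \eps$, I would sample within such intervals over blocks of $N-1$ consecutive steps. In each block the extreme at each sub-step is an ``old'' core point, so after $N-1$ sub-steps every point of the starting core has been replaced by a new sample; choosing each sample at least $(1+\alpha)m$ for some constant $\alpha=\alpha(p,N)>0$ derived from $\rho$, the core minimum multiplies by a factor $\ge 1+\alpha$ per block. Hence $K=\lceil\log_{1+\alpha}(1/(p\eps))\rceil$ blocks suffice to raise the minimum past $1/p$, at which point the statement is verified.

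The total length is $\ell=(N-1)K=O(\log(1/(p\eps)))$; each prescribed interval has length at least $c>0$, so by~\eqref{eqfullsupp} the conditional probability $\P(\zeta_{t_0+k}\in I_k\mid\F_{t_0+k-1})$ is at least $f(c)>0$, and the Markov property then produces $\delta(\eps)\ge f(c)^{\ell}>0$. The hard part is the bookkeeping required to verify that the adaptively chosen intervals retain a uniformly positive lower bound on their length across \emph{every} admissible intermediate core with $x_{(1)}\ge\eps$---in particular, when the current core already contains points close to $1$, one must check that the valid interval for $\zeta$ (possibly landing between existing points rather than above them) remains nonempty after intersection with $[0,1]$. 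This is precisely where the hypothesis $p>1$ is used, in a manner analogous to the direct computation in the proof of Claim~\ref{absorbing}.
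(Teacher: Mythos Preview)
Your overall architecture---multiplicative growth of the core minimum over blocks of $N-1$ steps, then iterating $O(\log(1/\eps))$ blocks---matches the paper's. The difference is in how each block is executed, and the paper's version is considerably simpler.

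The paper does not try to force the \emph{leftmost} point to be extreme. Instead, for a block starting with $x_{(1)}\ge\eps$, it samples every $\zeta$ in the \emph{fixed} interval $\bigl(\tfrac{1+p}{2}\eps,\,p\eps\bigr]$. Because all points remain $\ge\eps$, one has $p\mu\ge p\eps\ge\zeta$, so the new sample lies to the left of the $p$-centre of mass; and since $\zeta>\tfrac{1+p}{2}\eps\ge x_{(1)}$ (while the case $x_{(1)}>\tfrac{1+p}{2}\eps$ is already ``done''), the new sample is never the leftmost and hence never rejected. The extreme point is then either the current leftmost---which is an \emph{original} point, since all newly inserted $\zeta$'s exceed $\tfrac{1+p}{2}\eps$---or the current rightmost; and if the rightmost is one of the new $\zeta$'s, then every point lies at or below $p\eps\le p\mu$, so the extreme is again the leftmost. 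Either way an original point is removed. After $N-1$ samples all originals are gone and the new minimum exceeds $\tfrac{1+p}{2}\eps$. This avoids the adaptive bookkeeping entirely.

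Your route, by contrast, insists that the leftmost be extreme at every sub-step, and then asserts that this leftmost is always an ``old'' core point. That second claim is not justified as stated: once you have inserted some $\zeta_1\in((1+\alpha)m,\rho m)$ and removed the original minimum, the new leftmost may well be $\zeta_1$ itself (e.g.\ when the remaining original points sit near $1$). Forcing removal of the leftmost then deletes a new sample, not an old one, and the counting argument breaks. The patch is easy---if the leftmost is ever a new sample then the block goal $x_{(1)}\ge(1+\alpha)m$ is already met---but it is not in your write-up. Similarly, your growth factor $\rho$ and interval $I(\XX')$ are computed only for the uniform worst case $\XX'=\{m,\dots,m\}$; at intermediate sub-steps the core is mixed and the interval ensuring ``leftmost is extreme'' changes shape, so the uniform length bound $c$ you need is exactly the ``hard bookkeeping'' you flag but do not carry out. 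The paper's fixed-interval trick sidesteps all of this.
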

\begin{proof}
Suppose that for some $t$ we have $x_{(1)}(t)\ge \eps$. We claim that  it is possible to move $x_{(1)}$ to the right of $\frac{1+p}2\eps$ in at most $N-1$ steps  with positive probability, depending only on $p$ and $\eps$. Indeed, if $x_{(1)}(t)>\frac{1+p}2\eps$ then we are already done. 
Otherwise, if the new point $\zeta_{t+1}$ is sampled in $\left(\frac{1+p}2 \eps,p\eps \right]\subset [0,1]$ it cannot be rejected. If at this stage $x_{(1)}(t+1)>\frac{1+p}2\eps$, then we are done. If not, we proceed again by sampling $\zeta_{t+2} \in \left(\frac{1+p}2\eps,p\eps\right]$, etc. After at most $N-1$ steps of sampling new points in $\left(\frac{1+p}2\eps,p\eps\right]$, the leftmost point $x_{(1)}$ will have moved to the right of $\frac{1+p}2\eps$. 

Thus, in no more than $N-1$ steps, with probability no less than $\left[f\left(\frac{p-1}2\eps\right)\right]^{N-1}>0$, $x_{(1)}$ is to the right of $\frac{1+p}2\eps$.
By iterating this argument at most $m$ times, where $m\in\N$ is chosen such that $\left[\frac{1+p}{2}\right]^m \, \eps>1/p$, we achieve that $x_{(1)}$ is to the right of $1/p$ (for definiteness, one can chose $\ell=(N-1)m$ and $\d=
\left[f\left(\frac{p-1}2\eps\right)\right]^{(N-1)m}$.)
\end{proof}

\begin{lemma}\label{absorbinglemma}
Let $\eps\in(0,1)$,
and define $B(\eps):=
\{x_{(1)}(t)\ge \eps \text{ i.o.}\}$
Then
$
B(\eps) \imp \{ \XX'(t)\to 1\}.
$
\end{lemma}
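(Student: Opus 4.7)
The plan is to combine Lemma~\ref{lem_send_to_above_p} with Lemma~\ref{lem_all_above_p} through a conditional Borel--Cantelli argument. Lemma~\ref{lem_all_above_p} tells us that the region $\{x_{(1)}\ge 1/p\}$ is effectively absorbing and leads almost surely to $\XX'(t)\to 1$. It therefore suffices to prove
\[
B(\eps) \imp \{\exists\, t_0:\ x_{(1)}(t_0)\ge 1/p\},
\]
since we can then chain the two implications.

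To turn the ``infinitely often'' hypothesis into infinitely many \emph{independent} chances of reaching $1/p$, I would spread out the visits to $[\eps,1]$ by $\ell$ time units, where $\ell=\ell(\eps)$ is the constant from Lemma~\ref{lem_send_to_above_p}. Set
\[
\sigma_1=\inf\{t\ge 0:\ x_{(1)}(t)\ge \eps\},\qquad \sigma_{i+1}=\inf\{t\ge \sigma_i+\ell:\ x_{(1)}(t)\ge \eps\}.
\]
These are stopping times with respect to $(\F_t)$, and on the event $B(\eps)$ each $\sigma_i$ is almost surely finite (any tail of the sequence $x_{(1)}(t)$ still exceeds $\eps$ infinitely often, so after the $i$-th visit there must be one more at some time $\ge \sigma_i+\ell$).

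Now let $E_i=\{x_{(1)}(\sigma_i+\ell)\ge 1/p\}$. By the strong Markov property applied at $\sigma_i$ together with Lemma~\ref{lem_send_to_above_p},
\[
\P(E_i\mid \F_{\sigma_i})\cdot \mathbf{1}_{\{\sigma_i<\infty\}}\ge \delta\cdot \mathbf{1}_{\{\sigma_i<\infty\}},
\]
where $\delta=\delta(\eps)>0$. Since the $\sigma_i$ are separated by at least $\ell$, the conditional probabilities sum to $+\infty$ on $B(\eps)$, and L\'evy's extension of the Borel--Cantelli lemma (used in exactly the same way as in the proof of Proposition~\ref{liminfzero}(a)) yields that, almost surely on $B(\eps)$, infinitely many of the $E_i$ occur. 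In particular, there exists almost surely some (finite, random) $t_0=\sigma_i+\ell$ with $x_{(1)}(t_0)\ge 1/p$.

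Once this happens, Lemma~\ref{lem_all_above_p} applied from time $t_0$ gives $\XX'(t)\to 1$ a.s., finishing the proof. I do not expect any serious obstacle: the main point is simply to separate successive visits by $\ell$ steps so that each attempt to reach $1/p$ has a genuinely fresh, uniformly positive conditional probability of success, which is precisely what is needed for L\'evy's Borel--Cantelli to apply.
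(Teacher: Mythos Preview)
Your argument is correct and follows essentially the same route as the paper: define $\ell$-separated stopping times at successive visits of $x_{(1)}$ to $[\eps,1]$, use Lemma~\ref{lem_send_to_above_p} to get a uniform lower bound $\delta$ on the conditional probability of reaching $[1/p,1]$ within $\ell$ steps, apply L\'evy's Borel--Cantelli, and finish with Lemma~\ref{lem_all_above_p}. The only cosmetic difference is notation ($\sigma_i$ versus the paper's $\tau_k$) and that the paper first disposes of the trivial case $\eps\ge 1/p$.
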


\begin{corollary}\label{absorbingcor}
We have $\left\{\liminf_{t\to\infty} x_{(1)}(t)>0\right\}\imp \{ \XX'(t)\to 1\}.$
\end{corollary}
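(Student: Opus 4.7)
The plan is to deduce the corollary directly from Lemma~\ref{absorbinglemma} by a countable-union argument, exploiting the fact that having a positive $\liminf$ is a strictly stronger condition than being bounded below by some $\eps$ infinitely often.

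First, I would decompose the event $\{\liminf_{t\to\infty} x_{(1)}(t)>0\}$ as a countable union over rationals (or reciprocals of integers) of the events where $\liminf_{t\to\infty}x_{(1)}(t) \ge 1/n$. On each of these events, there is an (a.s.\ finite, random) time $T$ after which $x_{(1)}(t) \ge 1/n$ holds for every $t \ge T$; in particular $x_{(1)}(t) \ge 1/n$ holds infinitely often, so the event is contained in $B(1/n)$ in the notation of Lemma~\ref{absorbinglemma}. Therefore
\begin{align*}
\left\{\liminf_{t\to\infty} x_{(1)}(t)>0\right\} \;=\; \bigcup_{n=1}^{\infty} \left\{\liminf_{t\to\infty} x_{(1)}(t)\ge \tfrac1n\right\} \;\subseteq\; \bigcup_{n=1}^{\infty} B(1/n).
\end{align*}

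Next, I would apply Lemma~\ref{absorbinglemma} to each $\eps = 1/n$, which gives $B(1/n) \imp \{\XX'(t)\to 1\}$, i.e.\ $\mathbb P(B(1/n)\setminus\{\XX'(t)\to 1\})=0$ for every $n$. Since a countable union of null sets is null, the union $\bigcup_n B(1/n)$ is also contained in $\{\XX'(t)\to 1\}$ up to a set of measure zero, which is exactly the conclusion $\{\liminf x_{(1)}(t)>0\}\imp\{\XX'(t)\to 1\}$.

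There is essentially no obstacle here: the work has already been done in Lemma~\ref{absorbinglemma}, and the corollary is simply a clean reformulation obtained by noting that $\liminf > 0$ forces $x_{(1)}(t) \ge \eps$ \emph{eventually} (hence certainly i.o.) for some rational $\eps>0$. The only thing to be careful about is to pass to a countable family of thresholds so that the almost-sure inclusions combine correctly.
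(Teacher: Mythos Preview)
Your argument is correct and is precisely the intended derivation; the paper states the corollary without a separate proof, treating it as an immediate consequence of Lemma~\ref{absorbinglemma}. One small quibble: $\liminf_{t\to\infty} x_{(1)}(t)\ge 1/n$ does not literally give $x_{(1)}(t)\ge 1/n$ for all large $t$ (only $x_{(1)}(t)>1/n-\eps$ eventually for each $\eps>0$); the cleanest fix is to use the decomposition $\{\liminf>0\}=\bigcup_{n\ge 1}\{\liminf>1/n\}$ with a strict inequality, on which $x_{(1)}(t)>1/n$ eventually and hence $B(1/n)$ holds.
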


\begin{proof}[Proof of Lemma~\ref{absorbinglemma}]
Assume that $\eps<\frac 1p$ (otherwise the result immediately follows from Lemma~\ref{lem_all_above_p}). Also suppose
that $\P\left(B(\eps)\right)>0$, since otherwise the result is trivial. Let $\ell$ and $\d$ be the quantities from Lemma~\ref{lem_send_to_above_p}.

Define
\begin{align*}
\tau_0&=\inf\{t>0:\ x_{(1)}(t)>\eps\},\\
\tau_k&=\inf\{t>\tau_{k-1}+\ell:\ x_{(1)}(t)>\eps\},\quad k\ge 1,
\end{align*}
with the convention that if $\tau_k=\infty$ then $\tau_m=\infty$ for all $m>k$. Notice that $B(\eps)=\bigcap_{k=0}^\infty\{\tau_k<\infty\}$. 
On $B(\eps)$ we can also define $D_{\tau_k}=\{x_{(1)}(\tau_k+\ell)\ge 1/p\}$.
Since $\tau_k-\tau_{k-1}>\ell$ whenever both are finite, we have from Lemma~\ref{lem_send_to_above_p} we have
$\P(D_{\tau_{k+1}} \| \F_{\tau_k}) \ge \d $.
Therefore,
$$
B(\eps)\imp \left\{
\sum_{k\ge 0} 
\P(D_{\tau_{k+1}} \| \F_{\tau_k})=\infty
\right\}
$$
hence by  L\'evy's extension of the Borel-Cantelli lemma it follows that a.s.\
on $B(\eps))$ infinitely many (and hence at least one) of $D_{\tau_k}$ occur, that is, $x_{(1)}(\tau_k+\ell)\ge 1/p$. Now the result follows from Lemma~\ref{lem_all_above_p}.
\end{proof}

Assume for now that $p<\frac{N}2$; in this case $N-\frac{N}{2p}+1<N$ (see~\eqref{eqksmall}). The case $p\ge \frac{N}2$ will be dealt with separately.

The following statement shows that if all the points to the right of $x_{(k)}$ lie very near each other, while the left-most one lies near zero, then it is to be removed.
\begin{claim}\label{dragto1Claim}
Let $a\in(0,1]$ and suppose that  $k\in\{2,\dots,N-1\}$ satisfies~\eqref{eqksmall}. Then there exist small $\d,\Delta>0$, depending on $N,k,p,a$ such that if  
\begin{align*}
0&\le x_1\le \d;\\
x_1&\le x_i\le  x_N\quad \text{for }i=2,\dots,N-1;\\
x_k,x_{k+1},\dots,x_N&\in [a(1-\Delta),a)
\end{align*}
then $\{x_1,\dots,x_{N}\}'=\{x_2,\dots,x_N\}$.
\end{claim}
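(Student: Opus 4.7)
The plan is to verify that under the hypotheses, $x_1$ is strictly farther from the $p$-centre of mass $p\mu = \frac{p}{N}\sum_{i=1}^N x_i$ than $x_N$ is, since in the paper's setup the extreme point is always one of $x_{(1)}=x_1$ or $x_{(N)}=x_N$. Concretely, I would aim to show the single inequality $2p\mu > x_1 + x_N$: in the regime $p\mu \ge x_N$ this is automatic (as $x_1 \le x_N$), and in the complementary regime $p\mu < x_N$ it is equivalent to $p\mu - x_1 > x_N - p\mu$, which is exactly ``$x_1$ is farther''. Either way, $2p\mu > x_1 + x_N$ is what I need, and it also automatically entails $p\mu > x_1$, removing any sign issue.

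The key estimate is a lower bound on $\mu$ obtained by discarding the essentially uncontrolled middle coordinates and keeping only the cluster near $a$:
\begin{equation*}
2p\mu \;\ge\; \frac{2p}{N}\sum_{i=k}^{N} x_i \;\ge\; \frac{2p(N-k+1)}{N}\,a(1-\Delta) \;=:\; c\,a(1-\Delta).
\end{equation*}
The hypothesis~\eqref{eqksmall}, rewritten as $2p(N-k+1) > N$, is precisely the statement $c>1$. On the other side, $x_1 + x_N \le \delta + a$ since $x_1 \le \delta$ and $x_N < a$. Hence it suffices to arrange $c\,a(1-\Delta) > \delta + a$, which one does by first picking $\Delta \in (0,\,1-1/c)$ so that $c(1-\Delta) > 1$, and then any $\delta \in (0,\, a[c(1-\Delta)-1])$.

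I do not anticipate a genuine obstacle here; the claim is essentially a quantitative packaging of the threshold~\eqref{eqksmall}. The only care needed is to remember that the middle coordinates $x_2,\dots,x_{k-1}$ are controlled only by $x_1 \le x_i \le x_N$ and so have no useful lower bound, which is why the estimate on $\mu$ must rely \emph{exclusively} on the $N-k+1$ points in $[a(1-\Delta),a)$. Strictness of the final inequality (hence uniqueness of $x_1$ as the removed point) is automatic since $\Delta, \delta$ can be chosen with the above conditions strict.
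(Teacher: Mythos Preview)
Your proof is correct and follows essentially the same approach as the paper. Both arguments reduce to the single inequality $2p\mu > x_1 + x_N$ and verify it via the same lower bound on $\mu$ using only the $N-k+1$ clustered points; the only cosmetic difference is that the paper first rewrites $2N\bigl(p\mu - \tfrac{x_1+x_N}{2}\bigr) = 2p(x_2+\dots+x_{N-1}) - (N-2p)(x_1+x_N)$ and then bounds each side, whereas you bound $2p\mu$ and $x_1+x_N$ directly (keeping $x_N$ inside the sum for $\mu$), which leads to the equivalent threshold $2p(N-k+1)>N$.
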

\begin{proof}
The condition to remove the leftmost point is $p\mu-\frac{x_1+x_N}2> 0$ where $\mu=(x_1+\dots+x_N)/N$.
However,
\begin{align*}
2N \left(p\mu-\frac{x_1+x_N}2\right)&=2p(x_2+\dots +x_{N-1})-(N-2p)x_1-(N-2p) x_N \\
& \ge 2p(x_k+\dots +x_{N-1})-(N-2p)\delta -(N-2p) a
\\
& \ge 
2p(N-k)a(1-\Delta)-(N-2p)\delta -(N-2p) a
\\
&=a\left[2p(N-k)(1-\Delta)-(N-2p) \right]-(N-2p)\delta
\end{align*}
The RHS is linear in $\d$ and $\Delta$, and when $\d=\Delta=0$ it is strictly positive by the assumption on~$k$; hence it can also be made positive, by allowing $\d>0$ and $\Delta>0$ to be sufficiently small.
\end{proof}
\begin{corollary}\label{cordelta}
Suppose that $\XX(t)=\{x_1,\dots,x_N\}$ satisfies the conditions of Claim~\ref{dragto1Claim} for some $a$ and $k$. Let $\delta$ be the quantity from this claim. Then 
$$
\P(x_{(1)}(t+j)>\delta\text{ for some }1\le j\le k\|\F_t)\ge c=c_{a\Delta}>0.
$$ 
\end{corollary}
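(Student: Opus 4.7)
The plan is to force the leftmost point above $\delta$ by sampling several consecutive new points inside the narrow top cluster $[a(1-\Delta),a)$. Full support of $\zeta$ and~\eqref{eqfullsupp} give $\P(\zeta\in[a(1-\Delta),a))\ge f(a\Delta)>0$, so by independence the event
$G=\bigcap_{j=1}^{k-1}\{\zeta_{t+j}\in[a(1-\Delta),a)\}$
satisfies $\P(G\mid\F_t)\ge f(a\Delta)^{k-1}$, and this quantity will serve as the constant $c_{a\Delta}$.

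On $G$, I would argue by induction on $j=0,1,\ldots,k-1$ that either $x_{(1)}(t+i)>\delta$ for some $1\le i\le j$, or the configuration $\XX(t+j)$ still satisfies the hypotheses of Claim~\ref{dragto1Claim} with $k$ replaced by $k-j$. Suppose the second alternative holds at time $t+j$ with $k-j\ge 2$. Then Claim~\ref{dragto1Claim}, applied with parameter $k-j$ (which still satisfies~\eqref{eqksmall} because that condition is monotone decreasing in $k$), shows that $x_{(1)}(t+j)$ is the extreme point and is removed. Inserting $\zeta_{t+j+1}\in[a(1-\Delta),a)$ augments the top cluster by exactly one point; after sorting, the $N-k+j+2$ largest order statistics of $\XX(t+j+1)$ all lie in $[a(1-\Delta),a)$, and the remaining points are the old middle ones $x_{(2)}(t+j),\ldots,x_{(k-j-1)}(t+j)$. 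If the new leftmost exceeds $\delta$ we are done at index $j+1$; otherwise the hypotheses of Claim~\ref{dragto1Claim} with parameter $k-(j+1)$ are met and the induction proceeds.

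At step $j=k-1$, the top cluster would have absorbed all $N$ points, forcing $x_{(1)}(t+k-1)\ge a(1-\Delta)$. Up to replacing the $\delta$ of Claim~\ref{dragto1Claim} by $\min\{\delta,a(1-\Delta)/2\}$ (permissible, since the claim only requires $\delta$ to be sufficiently small), we may assume $\delta<a(1-\Delta)$; hence on $G$ the event $\{x_{(1)}(t+j)>\delta\text{ for some }1\le j\le k\}$ is guaranteed, which is what we wanted.

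The main obstacle I expect is the bookkeeping in the inductive step: one must check that the newly inserted $\zeta_{t+j+1}$ joins the top cluster (not the bottom tail) and that the old middle points retain their position between the updated leftmost and the cluster after the index shift. Both follow from $\zeta_{t+j+1}\ge a(1-\Delta)>\delta\ge x_{(1)}(t+j)$ combined with the uniform upper bound $a$ on all core points from the previous step, so the reindexing is clean and no further quantitative estimate beyond~\eqref{eqfullsupp} is required.
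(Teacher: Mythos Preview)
Your argument is correct and follows the same strategy as the paper: repeatedly sample new points in the top cluster $[a(1-\Delta),a)$, use Claim~\ref{dragto1Claim} to guarantee that the leftmost point is removed at each step, and observe that after at most $k-1$ such steps every point lies in the cluster, forcing $x_{(1)}>\delta$. The paper's proof is terser: it keeps the parameter $k$ fixed throughout (noting that the hypotheses of Claim~\ref{dragto1Claim} with the \emph{same} $k$ persist after each insertion, since at least $N-k+1$ points remain in the cluster), whereas you decrement the parameter to $k-j$ to track the growing cluster explicitly. Your bookkeeping is slightly more informative and yields the marginally better constant $f(a\Delta)^{k-1}$ versus the paper's $f(a\Delta)^{k}$; the only point you leave implicit is that the \emph{same} $\delta,\Delta$ chosen for the original $k$ remain admissible for every smaller parameter $k-j$, which is immediate from the monotonicity in $k$ of the inequality in the proof of Claim~\ref{dragto1Claim}.
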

\begin{proof}
The probability to sample a new point $\zeta\in (a(1-\Delta),a]$ is bounded below by $f(a\Delta)$ where~$f$ is the same function as in~\eqref{eqfullsupp}. On the other hand, if the new point is sampled in $(a(1-\Delta),a]$ then $\XX(t+1)$ continues to satisfy the conditions of Claim~\ref{dragto1Claim} as long as the leftmost point is in $[0,\delta]$. By repeating this argument at most $k$ times and using  the induction, we get the result with $c=\left[f(a\Delta)\right]^k>0$.
\end{proof}

\begin{lemma}\label{dragto1Lemma}
Let $k\in\N$ satisfy~\eqref{eqksmall}. Then
$$
\left\{ x_{(k)}(t)\not \to 0 \right\}\imp \left\{ \XX'(t) \to 1 \right\}.
$$
\end{lemma}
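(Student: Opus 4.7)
The idea is to combine Claim~\ref{dragto1Claim}, Corollary~\ref{cordelta}, and Lemma~\ref{absorbinglemma} with a conditional Borel--Cantelli argument, so as to force $x_{(1)}(t)\ge\eps$ infinitely often for some deterministic $\eps>0$, from which Lemma~\ref{absorbinglemma} will deliver $\XX'(t)\to 1$. On $\{x_{(k)}(t)\not\to 0\}$ there is a positive rational $\alpha$ with $x_{(k)}(t)\ge\alpha$ infinitely often, so a countable union over such $\alpha$ reduces everything to proving, for each fixed $\alpha>0$, that $E_\alpha:=\{x_{(k)}(t)\ge\alpha \text{ i.o.}\}\imp\{\XX'(t)\to 1\}$.

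The key auxiliary estimate I would establish is that there exist $\eps,c>0$ and $L\in\N$, depending only on $\alpha,N,k,p$, such that for every $t$
$$
\{x_{(k)}(t)\ge\alpha\}\imp\Bigl\{\P\bigl(x_{(1)}(s)\ge\eps\text{ for some }s\in(t,t+L]\,\big|\,\F_t\bigr)\ge c\Bigr\}.
$$
Given this, pick a sparse sequence of stopping times $\tau_1<\tau_2<\dots$ at which $x_{(k)}\ge\alpha$ with $\tau_{j+1}-\tau_j>L$ (all finite on $E_\alpha$), and apply L\'evy's extension of the Borel--Cantelli lemma to the events $\bigl\{x_{(1)}(s)\ge\eps\text{ for some }s\in(\tau_j,\tau_j+L]\bigr\}$, exactly as in the proof of Lemma~\ref{absorbinglemma}; this yields $x_{(1)}(s)\ge\eps$ infinitely often a.s.\ on $E_\alpha$, so Lemma~\ref{absorbinglemma} finishes the argument.

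To prove the estimate, fix $\Delta>0$ depending only on $N,k,p$ (small enough that the strict inequality in~\eqref{eqksmall} leaves room in the proof of Claim~\ref{dragto1Claim}), set $a:=\alpha/(1-\Delta)$ so that $a(1-\Delta)=\alpha$ and $a\le 1$ for $\alpha$ small, and let $\delta=\delta(a,\Delta)$ be the threshold from Claim~\ref{dragto1Claim}. The scheme is to sample $L_0=L_0(N)$ consecutive points inside the window $[a(1-\Delta),a)$; by~\eqref{eqfullsupp} this sequence has probability at least $f(a\Delta)^{L_0}>0$. I would argue by case analysis on whether $x_{(N)}>a$ or $x_{(N)}\le a$ that each such sample removes either an order statistic exceeding $a$ or the current leftmost point: in the first case the strict inequality~\eqref{eqksmall}, via the same computation as in Claim~\ref{dragto1Claim}, forces $p\mu<\tfrac12(x_{(1)}+x_{(N)})$ and hence makes $x_{(N)}$ extreme, whereas in the second case Claim~\ref{dragto1Claim} itself takes over. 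After at most $L_0=N-1$ such steps, the top $N-k+1$ order statistics all lie in $[a(1-\Delta),a)$ and $x_{(1)}\le\delta$, i.e.\ the hypotheses of Claim~\ref{dragto1Claim} hold. Corollary~\ref{cordelta} then supplies $k$ further samples in $[a(1-\Delta),a)$ that, with positive probability $c_{a,\Delta}$, lift $x_{(1)}$ above $\delta$; one takes $L:=L_0+k$, $\eps:=\delta$, and $c:=f(a\Delta)^{L_0}\,c_{a,\Delta}$.

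The main obstacle is to verify, uniformly over starting configurations with $x_{(k)}\ge\alpha$, that the sampling scheme never gets stuck---in particular, that a freshly sampled point in $[a(1-\Delta),a)$ does not itself become extreme before all outliers above $a$ have been purged. Since $x_{(1)}$ could be arbitrarily close to $0$ while $x_{(N)}$ ranges over $[\alpha,1]$, this requires a quantitative comparison showing that, as long as some order statistic above $a$ is still present, the inequality $p\mu<\tfrac12(x_{(1)}+x_{(N)})$ remains strict by an amount bounded away from zero uniformly; this is where~\eqref{eqksmall} is used. The technically delicate piece is stratifying the argument by the number of order statistics currently exceeding $a$, to guarantee the uniform constant $L_0(N)$ independent of the initial configuration.
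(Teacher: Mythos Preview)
Your overall framework---reducing to $\{x_{(1)}(t)\ge\eps\text{ i.o.}\}$ and then invoking Lemma~\ref{absorbinglemma} via a L\'evy Borel--Cantelli argument along a sparse sequence of stopping times---matches the paper's and is correct. The flaw lies in your proof of the auxiliary estimate.

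You claim that when $x_{(N)}>a$, condition~\eqref{eqksmall} forces $p\mu<\tfrac12(x_{(1)}+x_{(N)})$, making $x_{(N)}$ extreme. This is false. Condition~\eqref{eqksmall} is exactly the hypothesis used in Claim~\ref{dragto1Claim} to establish the \emph{opposite} inequality $p\mu>\tfrac12(x_1+x_N)$ (so that the \emph{leftmost} point is extreme) when the top $N-k+1$ points are clustered near~$a$; it says nothing about general configurations with $x_{(N)}>a$. Concretely: take $N=4$, $p=1.2$, $k=3$ (which satisfies~\eqref{eqksmall}), $\alpha=a(1-\Delta)=0.1$, core $\{0,0.9,0.9\}$, and sample $\zeta\approx 0.1$. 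Then $\XX=\{0,0.1,0.9,0.9\}$ has $x_{(N)}=0.9>a$, yet $p\mu=0.57>\tfrac12(0+0.9)=0.45$, so $x_{(1)}=0$ is removed, not $x_{(N)}$. Your ``main obstacle'' paragraph proposes to verify precisely the inequality that fails here, so the plan cannot be completed as stated.

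The paper's argument sidesteps this entirely and is much simpler: you never need to know \emph{which} of $x_{(1)}$ or $x_{(N)}$ is removed. The only observation required is that, as long as $\XX$ has points on \emph{both} sides of $I=(a(1-\Delta),a]$, no point inside $I$ can be extreme (only the global minimum or maximum can). Hence each step removes a point outside $I$, and after at most $N-1$ samples one side of $I$ is empty. If the left side empties, then $x_{(1)}>a(1-\Delta)$ and we are done. If the right side empties, then $\max\XX\le a$; moreover the number of points \emph{left} of $I$ never increased (we only added points in $I$), so there are still at most $k-1$ of them, whence $x_{(k)},\dots,x_{(N)}\in I$ and Claim~\ref{dragto1Claim}/Corollary~\ref{cordelta} apply. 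Condition~\eqref{eqksmall} enters only at this final stage, via Claim~\ref{dragto1Claim}, not during the purging of outliers above~$a$.
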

\begin{proof}
Note that by Lemma~\ref{absorbinglemma}, it suffices to show that  $\left\{ x_{(k)}(t)\not \to 0 \right\}\imp \left\{x_{(1)}(t)\not \to 0\right\}$.

If $x_{(k)}(t)\not\to 0$, there exists an $a>0$ such that $x_{(k)}(t)\ge a$ for infinitely many $t$'s. Let $s$ be such a time. Now suppose that $\zeta_{s+i}\in I:=(a(1-\Delta),a]$ for $i=0,1,\dots, N-1$ where $\Delta$ is defined in Claim~\ref{dragto1Claim}; the probability of this event is strictly positive and depends only on~$a$ and~$\d$ (see~\eqref{eqfullsupp}). As long as there are points of $\XX(s+i)$ on {\em both} sides of the interval $I$, none of the points inside $I$ can be removed; hence, for some $u\in\{s,s+1,\dots,s+N-1\}$ we have that
either $\min \XX(u)> a(1-\Delta)$ or $\max \XX(u)\le  a$. In the first case, $x_{(1)}(u)>a(1-\Delta)$. 

In the latter case, both $x_{(N)}(u)\in I$ and $x_{(k)}(u)\in I$, since every time we replaced a point, the number of points to the left of $I$ did not increase (and there were initially at most $k-1$ of them). As a result
$$
a(1-\Delta)\le x_{(k)}(u)\le x_{(k+1)}(u)
\le \dots \le x_{(N)}(u)\le a.
$$
Together with Corollary~\ref{cordelta}, this yields
$$
\{x_{(k)}(t)\ge a\text{ i.o.}\}
\imp
\{x_{(1)}(t)\ge \min\{a(1-\Delta),\delta\}\text{ i.o.}
\}\imp \{x_{(1)}(t)\not\to 0\}
$$
which proves Lemma~\ref{dragto1Lemma}.
\end{proof}
\vskip 1cm

\begin{claim}\label{claimcriticalp}
Let $A_i:=\left\{x_{(i)}(t)\to 0\right\}$ and suppose that 
for some $1\le k\le N-2$ we have
\begin{align}\label{eqkspecial}
\left\{2p(N-k-1)<N \right\}\Longleftrightarrow \left\{ k>N-\frac{N}{2p}-1\right\}.
\end{align}
Then 
$
A_k\subseteq \{\exists \lim_{t\to\infty} x_{(k+1)}(t)\}.
$
\end{claim}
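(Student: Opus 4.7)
The plan is to combine a careful analysis of the one-step dynamics of $x_{(k+1)}(t)$ with condition \eqref{eqkspecial}, and then split on whether $\liminf_{t\to\infty} x_{(k+1)}(t)$ is positive or zero.

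First I would work out the step-wise evolution of $x_{(k+1)}(t)$. If the rightmost point is removed, $x_{(k+1)}$ weakly decreases; if the leftmost is removed with $\zeta_{t+1}\le x_{(k+1)}(t)$, it is unchanged; the only way $x_{(k+1)}$ can strictly increase is when the leftmost is removed \emph{and} $\zeta_{t+1}>x_{(k+1)}(t)$, and in that case re-sorting yields $x_{(k+1)}(t+1)=\min(\zeta_{t+1},x_{(k+2)}(t))$ and, crucially, $x_{(k)}(t+1)=x_{(k+1)}(t)$. Hence, on $A_k$, any such increase event with $x_{(k+1)}(t)\ge\eps$ forces $x_{(k)}(t+1)\ge\eps$, which---since $x_{(k)}(t)\to 0$ on $A_k$---can occur only finitely often a.s. So there is a random $T_\eps<\infty$ such that, for all $t\ge T_\eps$, $x_{(k+1)}(t)\ge\eps$ implies $x_{(k+1)}(t+1)\le x_{(k+1)}(t)$.

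Condition \eqref{eqkspecial} enters to control the ``low regime'' $x_{(k+1)}(t)<\eps$. Bounding $x_{(i)}\le\eps$ for $i\le k+1$ and $x_{(i)}\le x_{(N)}$ for $i\ge k+2$ in the criterion $2p(x_{(2)}+\cdots+x_{(N-1)})>(N-2p)(x_{(1)}+x_{(N)})$ for leftmost removal shows that the leftmost can be the extreme point only if $x_{(N)}(t)<C\eps$, where
\[
C:=\frac{2pk}{N-2p(N-k-1)};
\]
the denominator is positive precisely by \eqref{eqkspecial}, and a short check gives $C\ge 1$ whenever $p>1$ and $N\ge 3$. Consequently every upward jump of $x_{(k+1)}$ in the low regime satisfies $x_{(k+1)}(t+1)\le x_{(k+2)}(t)\le x_{(N)}(t)<C\eps$.

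I would then conclude by a dichotomy on $\liminf x_{(k+1)}(t)$. If $\liminf x_{(k+1)}(t)>0$, choose $\eps$ strictly smaller than it; then eventually $x_{(k+1)}(t)\ge\eps$ for all large $t$, so the first observation makes $x_{(k+1)}$ eventually non-increasing, hence convergent. If $\liminf x_{(k+1)}(t)=0$, then for every $\eps>0$ there exists $t_1\ge T_\eps$ with $x_{(k+1)}(t_1)<\eps$; a one-step induction from $t_1$---handling $x_{(k+1)}(t)<\eps$ by the low-regime bound $<C\eps$ and $\eps\le x_{(k+1)}(t)\le C\eps$ by non-increase, with the inequality $C\ge 1$ ensuring that the two regimes together cover everything up to $C\eps$---yields $x_{(k+1)}(t)\le C\eps$ for all $t\ge t_1$, so $\limsup x_{(k+1)}(t)\le C\eps$; letting $\eps\downarrow 0$ along a countable sequence forces $\lim x_{(k+1)}(t)=0$. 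Either way the limit exists. The main delicate step is this one-step induction: one must combine the low-regime jump bound and the high-regime non-increase into a single uniform upper bound that propagates forward in time, and the compatibility $C\ge 1$ coming from $p>1$ is what makes this possible.
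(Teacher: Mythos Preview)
Your argument is correct and tracks the paper's proof closely: both show that, once $x_{(k)}$ is small, $x_{(k+1)}$ cannot increase while above a threshold (since an increase would force $x_{(k)}(t+1)=x_{(k+1)}(t)$), and both invoke~\eqref{eqkspecial} to bound upward jumps from below the threshold, concluding by an eventually-non-increasing/bounded-$\limsup$ dichotomy. One small remark: the inequality $C\ge 1$ is not actually needed---your one-step induction goes through with the invariant $x_{(k+1)}(t)\le\max(1,C)\,\eps$---so the proof works for all $p$ satisfying~\eqref{eqkspecial}, in line with Remark~\ref{extraplessone}.
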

\begin{proof}
Fix any $a>0$.
Let $\d>0$ be so small that
\begin{align}\label{eq2pN}
2pN\d <[N-2p(N-k-1)]a.
\end{align}
In the event $A_k$ there exists a finite $\tau=\tau_\d(\omega)$ such that 
$$
\left\{\sup_{t\ge \tau} x_{(k)}(t)\le \d\right\}
\Longleftrightarrow
\left\{ {\rm card}\,\left( \XX'(t) \cap [0,\d]\right)\ge k\text{ for all }t\ge \tau.\right\}
$$
From now on assume that $t\ge \tau$. We will show below that $x_{(k+1)}(t+1)\le \max\{x_{(k+1)}(t),a\}$.

To begin, let us prove that $x_{(k+1)}(t+1)\le x_{(k+1)}(t)$ as long as $x_{(k+1)}(t)>\d$.
Indeed, if the new point $\zeta$ is sampled to the left of $x_{(k+1)}(t)$, then regardless of which point is to be removed, $x_{(k+1)}(t+1)\le x_{(k+1)}(t)$. If the new point $\zeta$ is sampled to the right, then the farthest point from the $p-$centre of mass must be the rightmost one (and hence $x_{(k+1)}(t+1)=x_{(k+1)}(t)$) since there are exactly $k$ points in $[0,\d]$ and none of these can be removed by the definition of $\tau$.

On the other hand, if $x_{(k+1)}(t)\le \d$ then either $x_{(k+2)}(t)\le a$ or $x_{(k+2)}(t)> a$. In the first case, $x_{(k+1)}(t+1)\le x_{(k+2)}(t)\le a$ even if $x_{(1)}$ is removed. In the other case, when $x_{(k+2)}(t)> a$, we have $x_{(N-1)}>a$ as well, and
\begin{align*}
p\mu(\XX(t+1))&\le p\frac{(k+1)\d+(N-k-1)x_{(N)}}{N}<
\frac{2pN\d-[N-2p(N-k-1)]x_{(N)} +N x_{N}}{2N}
\\
&\le 
\frac{N x_{N} -\left\{[N-2p(N-k-1)]a-2pN\d\right\})}{2N}
<\frac{x_{(N)}}2
\end{align*}
by~\eqref{eq2pN}, so $x_{(N)}=x_{(N)}(t)$ must be removed and thus $x_{(k+1)}(t+1)\le x_{(k+1)}(t)$.

Consequently, we obtained
\begin{align*}
A_k&\subseteq \bigcap_{t\ge\tau} \left\{x_{(k+1)}(t+1)\le \max\{x_{(k+1)}(t),a \}\right\}
\\
&\subseteq
\left(\bigcup_{t\ge 0} 
\left\{x_{(k+1)}(s)\le a \text{ for all }s\ge t\right\}
\right)
\cup
\left(\bigcup_{t\ge 0} \left\{x_{(k+1)}(s)\le x_{(k+1)}(s+1) \text{ for all }s\ge t\right\}\right)
\\
&\subseteq
\left\{\limsup_{t\to\infty} x_{(k+1)}(t)\le a\right\}
\cup
\left\{\exists \lim_{t\to\infty} x_{(k+1)}(t)\right\}
\end{align*}
since $a>0$ is arbitrary, we get 
$$
A_k\subseteq 
\left\{\limsup_{t\to\infty} x_{(k+1)}(t)\le 0\right\}
\cup
\left\{\exists \lim_{t\to\infty} x_{(k+1)}(t)\right\}
=\left\{\exists \lim_{t\to\infty} x_{(k+1)}(t)\ge 0\right\}
$$
\end{proof}

\begin{lemma}\label{lemintN2p}
Suppose that~\eqref{eqkspecial} holds for some $1\le k\le N-2$. Then $A_k \imp A_{k+1}$.
\end{lemma}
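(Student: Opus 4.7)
The plan is as follows. By Claim~\ref{claimcriticalp}, on $A_k$ the limit $L:=\lim_t x_{(k+1)}(t)$ exists a.s., so it suffices to show that $L=0$ a.s.\ on $A_k$. I proceed by contradiction: assume $\P(A_k\cap\{L>0\})>0$, so that for some rational $q>0$ the event $E_q:=A_k\cap\{L>q\}$ has positive probability. The goal is to derive a contradiction by producing, uniformly in $t$ large enough, a positive conditional probability event on which $x_{(k+1)}(t+1)<q$, contradicting the definition of $E_q$.

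On $E_q$ pick a finite random $T$ such that $x_{(k+1)}(t)>q$ and $x_{(k)}(t)<\delta$ for every $t\ge T$, where the small parameter $\delta>0$ (depending only on $p,N,k,q$) will be fixed below. The heart of the argument is that, for $t\ge T$, whenever $\zeta_{t+1}$ falls in a suitable interval $I_t\subset(x_{(k)}(t),q)$, the pre-removal configuration $\XX(t+1)=\XX'(t)\cup\{\zeta_{t+1}\}$ has its rightmost point $x_{(N-1)}(t)$ as the farthest from the $p$-centre of mass; in that case this point (and not the interior $\zeta_{t+1}$, which is never extreme) is removed, so that $x_{(k+1)}(t+1)=\zeta_{t+1}<q$.

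To locate $I_t$, write the sorted pre-removal configuration as $y_1\le\ldots\le y_N$ and use the bound $N\mu\le k\,x_{(k)}(t)+\zeta_{t+1}+(N-k-1)y_N$. The criterion $y_1+y_N>2p\mu$ for the rightmost to be farthest then reduces to
$$
[N-2p(N-k-1)]\,y_N>2p\bigl(k\,x_{(k)}(t)+\zeta_{t+1}\bigr).
$$
This is precisely where hypothesis~\eqref{eqkspecial} is indispensable, as it makes $C:=N-2p(N-k-1)$ strictly positive. Using $y_N\ge x_{(k+1)}(t)>q$, I take $I_t:=\bigl(x_{(k)}(t),\min\{q,Cq/(2p)-k\delta\}\bigr)$ and choose $\delta$ small enough that $|I_t|$ is bounded below by a positive constant uniformly in $t\ge T$. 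By the full-support property~\eqref{eqfullsupp}, this gives $\P(\zeta_{t+1}\in I_t\mid\F_t)\ge\nu>0$ uniformly.

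Setting $G_t:=\{x_{(k)}(t)<\delta\}\cap\{x_{(k+1)}(t)>q\}\in\F_t$ (which holds for all $t\ge T$ on $E_q$), we then have $\P(x_{(k+1)}(t+1)<q\mid\F_t)\ge\nu\,{\bf 1}_{G_t}$ for every $t$, and the sum of these conditional probabilities diverges almost surely on $E_q$. L\'evy's extension of the Borel--Cantelli lemma will then force $\{x_{(k+1)}(t+1)<q\}$ to occur at infinitely many $t$ a.s.\ on $E_q$, contradicting $x_{(k+1)}(t)>q$ for all $t\ge T$. Hence $\P(E_q)=0$ for every rational $q>0$, yielding $L=0$ a.s.\ on $A_k$, i.e., $A_k\imp A_{k+1}$. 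The main obstacle is the removal-criterion estimate above; once~\eqref{eqkspecial} is used to guarantee $C>0$, the remainder is routine bookkeeping.
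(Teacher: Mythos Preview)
Your argument is correct and shares the paper's overall architecture: invoke Claim~\ref{claimcriticalp} to get existence of $L=\lim_t x_{(k+1)}(t)$ on $A_k$, then contradict $L>0$ by showing that the ``gap'' configuration $\{x_{(k)}(t)<\delta,\ x_{(k+1)}(t)>q\}$ cannot persist, via a Borel--Cantelli type argument applied to the event that $\zeta_{t+1}$ lands inside the gap.

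The difference lies in how the gap is broken. You invoke hypothesis~\eqref{eqkspecial} a second time to guarantee $C=N-2p(N-k-1)>0$, and use this to carve out a sub-interval $I_t$ on which the \emph{rightmost} point of $\XX'(t)\cup\{\zeta_{t+1}\}$ is provably the extreme one; then $x_{(k+1)}(t+1)=\zeta_{t+1}<q$ directly. The paper does less: it samples $\zeta$ anywhere in the middle third $(a/3,2a/3)$ of the gap and merely observes that $\zeta$, being strictly interior, can never be removed; consequently, after deletion of whichever extreme point, $\zeta$ becomes either the $k$-th or the $(k+1)$-th order statistic of the new core, so $C_{t+1}$ fails either way. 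In particular, the paper's gap-breaking step does \emph{not} use~\eqref{eqkspecial} at all --- that condition enters only through Claim~\ref{claimcriticalp}. Your route is a little heavier (and you close with L\'evy's Borel--Cantelli rather than the paper's direct product bound $\P(\bar C_{T_0})\le (1-f(a/3))^k$), but it is perfectly valid and has the minor conceptual payoff of pinning down exactly which point leaves.
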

\begin{proof}
Let ${\tilde A}_{k+1}^{\ge a}:=\left\{\lim_{t\to\infty} x_{(k+1)}(t)\ge a\right\}$ (the existence of this limit on $A_k$ follows from Claim~\ref{claimcriticalp}).
It suffices to show that $\P\left(A_k \cap \tilde A_{k+1}^{\ge a}\right)=0$ for all $a>0$; then from the continuity of probability we get that $\P\left(A_k \cap \{\lim_{t\to\infty} x_{(k+1)}(t)>0\}\right)=0$ and hence $A_k\imp A_{k+1}$.

Fix an $a>0$. Let
\begin{align*}
C_t&=\left\{x_{(k)}(t)< \frac a3\text{ and }x_{(k+1)}(t)> \frac{2a}3\right\},\qquad 
\bar C_T=\bigcap_{t\ge T} C_t,
\end{align*}
then
$$
A_k \cap \tilde A_{k+1}^{\ge a}\subseteq \bigcup_{T\ge 0} \bar C_T
=
\left\{\exists T>0:\ x_{(k)}(t)< \frac a3\text{ and }x_{(k+1)}(t)> \frac{2a}3\text{ for all }t\ge T\right\}.
$$
If the probability of the LHS is positive, then, using the continuity of probability and the fact that $\bar C_T$ is an increasing sequence of events, we obtain that $\lim_{T\to\infty} \P(\bar C_T)>0$. Consequently, there exists a {\em non-random} $T_0$ such that $\P(\bar C_{T_0})>0$.

This is, however, impossible, as at each time point $t$, with probability at least $f(a/3)$ (see~\eqref{eqfullsupp}) the new point $\zeta_t$ is sampled in $B:=\left(\frac{a}3,\frac{2a}3\right)$ and then either $x_{(k)}(t+1)\in B$ or $x_{(k+1)}(t+1)\in B$.
Formally, this means that
$$
\P(C_{t+1} \| C_t,\F_t)\le 1-f(a/3)\quad\text{for all }t\ge 0.
$$
By induction, for all $k\ge 1$,
$$
\P(\bar C_{T_0}\|\F_{T_0})\le \P\left(\bigcap_{T=T_0}^{T_0+k} C_t \| \F_{T_0}\right)
\le \left[1-f(a/3)\right]^k.
$$
Since $k$ is arbitrary, and $f(a/3)>0$, by taking the expectation, we conclude that $\P(\bar C_{T_0})=0$ yielding a contradiction.

Hence the probability of the event $A_k\cap \tilde A_{k+1}^{\ge a}$ is zero.
\end{proof}

\begin{corollary}\label{corintN2p}
Suppose that~\eqref{eqkspecial} holds for some $1\le k\le N-2$. Then 
$$
\left\{ x_{(k)}(t) \to 0 \right\}\imp \left\{ \XX'(t) \to 0 \right\}.
$$
\end{corollary}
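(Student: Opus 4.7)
The plan is simply to iterate Lemma~\ref{lemintN2p} until the index reaches $N-1$, and then observe that $A_{N-1}$ already encodes convergence of the whole core to zero.

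First, I would note that the hypothesis~\eqref{eqkspecial}, rewritten as $k>N-\frac{N}{2p}-1$, is monotone increasing in~$k$: if it holds for a given $k\in\{1,\dots,N-2\}$, then it automatically holds for every $k'\in\{k,k+1,\dots,N-2\}$. Consequently, Lemma~\ref{lemintN2p} applies with every such $k'$, yielding the chain
\begin{align*}
A_k\ \imp\ A_{k+1}\ \imp\ A_{k+2}\ \imp\ \dots\ \imp\ A_{N-1}.
\end{align*}
Composing these almost sure inclusions gives $A_k\imp A_{N-1}$.

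Finally, I would translate $A_{N-1}=\{x_{(N-1)}(t)\to 0\}$ into convergence of the core. Since $\supp\zeta=[0,1]$ and the initial configuration lies in $\supp\zeta$, every point of $\XX(t)$ remains in $[0,1]$, so in particular $0\le x_{(1)}(t)\le x_{(2)}(t)\le\dots\le x_{(N-1)}(t)$. Hence $x_{(N-1)}(t)\to 0$ forces $x_{(i)}(t)\to 0$ for every $i=1,\dots,N-1$, which is exactly $\XX'(t)\to 0$. Combining with the previous step, $A_k\imp\{\XX'(t)\to 0\}$, as required.

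There is no real obstacle here beyond making sure the induction is legitimate, i.e.\ verifying that~\eqref{eqkspecial} propagates from $k$ up to $N-2$ so that each step of the chain is an instance of Lemma~\ref{lemintN2p}; the monotonicity observation above handles that in one line.
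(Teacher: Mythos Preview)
Your proposal is correct and follows exactly the paper's own argument: observe that~\eqref{eqkspecial} propagates from $k$ to every larger index up to $N-2$, iterate Lemma~\ref{lemintN2p} to obtain the chain $A_k\imp A_{k+1}\imp\dots\imp A_{N-1}$, and finally identify $A_{N-1}$ with $\{\XX'(t)\to 0\}$. The only difference is that you spell out the last identification in a bit more detail than the paper does.
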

\begin{proof}
Observe that if $k$ satisfies~\eqref{eqkspecial} then $k+1$ satisfies~\eqref{eqkspecial} as well. Thus by iterating Lemma~\ref{lemintN2p} we obtain that $A_k\imp A_{k+1}\imp A_{k+2}\imp\dots\imp A_{N-1}$, i.e. $x_{(N-1)}(t)\to 0$, which is equivalent to the statement of Corollary.
\end{proof}
\begin{rema}\label{extraplessone}
Note that the condition~\eqref{eqkspecial} does not assume $p>1$; hence the conclusion of Corollary~\ref{corintN2p} holds  for the case $0<p\le 1$ as well.
\end{rema}

\vskip 1cm
For the case $p\ge \frac{N}2$ we have
\begin{lemma}\label{lempN2}
If $p\ge \frac{N}2$ then $\XX'(t)\to 1$ a.s.
\end{lemma}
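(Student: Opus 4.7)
The plan is to exploit the fact that when $p\ge N/2$ the leftmost point of $\XX(t)$ is (almost always) the one removed, which turns the evolution into an order-preserving sorting rule, and then use the full support of $\zeta$ to push the entire core to $1$.

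First I would verify that for $p\ge N/2$ the inequality $2p\mu\ge x_{(1)}+x_{(N)}$, which is equivalent to the leftmost point being at least as far from $p\mu$ as the rightmost, holds automatically because
$$
2p\mu=\frac{2p}{N}\sum_{i=1}^N x_i\ge \sum_{i=1}^N x_i\ge x_{(1)}+x_{(N)},
$$
using $2p/N\ge 1$ and $x_i\ge 0$. Generically the leftmost point is therefore removed; the only way equality can hold (producing a tie with the rightmost) is $x_{(2)}=\dots=x_{(N-1)}=0$ and hence $x_{(1)}=0$, so ties occur only when $c_1(t):=x_{(1)}(t)=0$. Writing $c_j(t):=x_{(j)}(t)$ for the sorted core and setting $c_N\equiv+\infty$, in the non-tie case the new core is obtained by deleting the minimum of $\{c_1(t),\dots,c_{N-1}(t),\zeta_{t+1}\}$, giving the order-preserving update
$$
c_j(t+1)=\mathrm{median}\bigl\{c_j(t),\,\zeta_{t+1},\,c_{j+1}(t)\bigr\},\qquad 1\le j\le N-1.
$$

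This formula immediately shows that every $c_j(\cdot)$ is non-decreasing in the tie-free regime, and an inspection of the two tie outcomes (remove leftmost / remove rightmost) confirms that $c_1$ is non-decreasing \emph{without exception}, so $c_1(t)\to L_1\in[0,1]$ exists. I would then observe that the number $k(t)$ of zeros in the core is a finite-state Markov chain on $\{0,1,\dots,N-1\}$ whose transitions combine $\{\zeta_{t+1}=0\}$ vs.\ $\{\zeta_{t+1}>0\}$ (both of which have positive probability under full support) with the $\tfrac12/\tfrac12$ tie-breaking; a short case analysis reveals that from every non-zero state the chain can reach $0$ in finitely many steps with positive probability, so $T:=\inf\{t:\,c_1(t)>0\}$ is a.s.\ finite. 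For $t\ge T$ no further tie is possible (a tie requires $\ge N-1$ zeros in $\XX(t+1)$, but once $c_1(t)>0$ at most one zero can appear, coming from $\zeta_{t+1}=0$); hence every order statistic $c_j(t)$ is non-decreasing for $t\ge T$ and converges to some $L_j\in[0,1]$ with $L_1\le L_2\le\dots\le L_{N-1}$.

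To conclude I would show $L_j=1$ for every $j$ by letting $j^*$ be the largest index with $L_{j^*}<1$, assuming such an index exists. If $j^*=N-1$, fix $\eps\in(0,1-L_{N-1})$; the event $\{\zeta_t>L_{N-1}+\eps\}$ has probability $\ge f(1-L_{N-1}-\eps)>0$ at every step, so by L\'evy's extension of Borel--Cantelli it occurs infinitely often, and each occurrence forces $c_{N-1}(t+1)\ge \zeta_t>L_{N-1}+\eps$, contradicting $c_{N-1}(t)\to L_{N-1}$. If $j^*<N-1$ then $L_{j^*+1}=1$, so $c_{j^*+1}(t)>L_{j^*}+2\eps$ for all sufficiently large $t$ and some small $\eps>0$; the event $\{\zeta_t\in(L_{j^*}+\eps,L_{j^*}+2\eps)\}$ has uniformly positive probability and thus occurs i.o., and each occurrence gives, via the median formula, $c_{j^*}(t+1)=\zeta_t>L_{j^*}+\eps$, again a contradiction. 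Hence all $L_j=1$ and $\XX'(t)\to 1$ a.s.

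The main technical obstacle will be the tie analysis at $p=N/2$, which breaks the naive monotonicity of $c_j$ for $j\ge 2$; the delicate point is to verify that the zero-count Markov chain is almost surely absorbed at $0$ so that the clean monotone regime is eventually entered, after which the downward induction on $j^*$ runs smoothly.
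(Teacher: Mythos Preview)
Your proof is correct and takes a genuinely different route from the paper's. The paper's argument is very short: for $p>N/2$ it observes that the leftmost point is always strictly removed, so $x_{(1)}(t)$ is non-decreasing and eventually positive, and then simply invokes Corollary~\ref{absorbingcor} (built on Lemmas~\ref{lem_all_above_p}--\ref{absorbinglemma}) to conclude $\XX'(t)\to 1$; the case $p=N/2$ is dispatched in two sentences by noting ties only occur when the core is all zeros and that this state is eventually escaped. Your approach, by contrast, is self-contained: you never appeal to the absorbing-region machinery but instead exploit the median update rule to get monotonicity of \emph{every} order statistic after ties cease, and then run a downward induction on the limits $L_j$ using the full support of $\zeta$. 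This buys independence from the earlier lemmas at the cost of a longer direct argument; conversely the paper's version is shorter precisely because Corollary~\ref{absorbingcor} has already packaged the ``push to $1$'' mechanism.

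Two minor points to tighten. First, your parenthetical ``both of which have positive probability under full support'' is not quite right: full support does not force $\P(\zeta=0)>0$. This is harmless, since your Markov-chain absorption argument only needs $\P(\zeta>0)>0$; in fact your tie analysis for $p=N/2$ is more careful than the paper's, which tacitly ignores the possibility that $\zeta_{t+1}=0$ can reintroduce a tie from state $N-2$. Second, in the final induction the quantities $L_{j^*}$ and $\eps$ are random, so the Borel--Cantelli step should be phrased by fixing a rational $r<1$, working on the event $\{L_{j^*}<r\}$, and using the deterministic interval $(r,(1+r)/2)$; this is a routine fix.
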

\begin{proof}
The case $p>\frac{N}2$ is easy: with a positive probability the newly sampled point $\zeta>0$ and then
$$
p\ \frac{x_{(1)}+\dots +x_{(N-1)}+\zeta}{N}
>\frac{x_{(1)}+\dots+x_{(N-1)}+\zeta}2\ge \frac{x_{(1)}+\zeta}2
$$
hence it is the left-most point which is always removed, implying $\liminf_{t\to\infty} x_{(1)}(t)>0$. Hence by Corollary~\ref{absorbingcor}, $\XX'(t)\to 1$ a.s.

For the case $p=\frac{N}2$ we notice that at each moment of time we either have a tie (between the left-most and right-most point) or remove the left-most point. However, we can only have a tie if $x_{(1)}(t)=...=x_{(N-1)}(t)=0$; in this case,  eventually the right-most point will be kept and the left-most removed. After this moment of time, there will be more ties, and the left-most point will always be removed, leading to the same conclusion as in the case $p>N/2$.
\end{proof}

\begin{proof}[Proof of Theorem~\ref{thmpgreater}]
Part (b) follows from Lemma~\ref{lem_all_above_p}.

To prove part (c), note that unless $x_{(1)}(0)>0$ already, by repeating the arguments from the beginning of the proof of Lemma~\ref{dragto1Lemma}, with a positive probability we can ``drag" the whole configuration in at most $N-1$ steps to the right of zero, that is, there is $0\le t_0\le N-1$ such that $\P(\min\XX'(t_0)>0)>0$. Now we can apply Lemma~\ref{lem_send_to_above_p} and then Lemma~\ref{lem_all_above_p}.

Let us now prove part (a).
First, assume  $p<\frac{N}2$.
It is always possible to find an integer $k$ which satisfies both~\eqref{eqksmall} and~\eqref{eqkspecial}, so let $k$ be  such that
$$
N-\frac{N}{2p}-1<k<N-\frac{N}{2p}+1
$$
(if $N/(2p)\in\N$ this $k$ will be unique).
Now the statement of the theorem follows from Corollary~\ref{corintN2p} and Lemma~\ref{dragto1Lemma}.

Finally, in case $p\ge \frac{N}2$ the theorem follows from Lemma~\ref{lempN2}.
\end{proof}

\section{Non-convergence to zero for $p>1$ and $N=3$}
In this section we prove the following
\begin{thm}\label{thmnotzero}
Suppose that $N=3$, $p>1$ and  $\zeta$, restricted to  some neighbourhood of zero, is a continuous random variable with a non-decreasing density (e.g.\ uniformly distributed).
Then $\XX'(t)\to 1$ as $t\to\infty$ a.s.
\end{thm}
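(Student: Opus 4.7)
By Theorem~\ref{thmpgreater}(a), almost surely $\XX'(t)\to 0$ or $\XX'(t)\to 1$; it thus suffices to prove $\P(\XX'(t)\to 0)=0$. Lemma~\ref{lempN2} already covers $p\ge N/2=3/2$, so we may assume $1<p<3/2$.

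The plan is to exhibit a non-negative supermartingale that blows up on $\{\XX'(t)\to 0\}$. The natural candidate $-\log x_{(2)}(t)$ does \emph{not} work: a direct computation in the small-core regime shows that the expected drift of $\log x_{(2)}$ can be strictly negative when $p$ is close to $1$. The right choice is instead
$$V(x_1,x_2):=-\log(x_1 x_2),$$
which is non-negative on $[0,1]^2$ and tends to $+\infty$ whenever either coordinate tends to $0$.

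To check that $V$ is a supermartingale when the core is small, consider the transition from $\XX'(t)=\{x_1,x_2\}$ with $x_1\le x_2=y$ upon sampling $\zeta=z$. A direct comparison of the extreme candidates with $p\mu=p(x_1+y+z)/3$ partitions the transitions into three regions, governed by the thresholds
$$t_1:=\tfrac{2py}{3-2p}-x_1,\quad t_2:=\tfrac{(3-2p)(x_1+y)}{2p},\quad t_3^+:=\max\bigl(0,\tfrac{2p x_1}{3-2p}-y\bigr).$$
When $z\in(t_2,t_1)$ the leftmost point $x_1$ is removed, and $\log(x_{(1)}x_{(2)})$ increases by $\log(z/x_1)\ge 0$; when $z\in(t_3^+,t_2)$ the rightmost point $x_2$ is removed, and $\log(x_{(1)}x_{(2)})$ decreases by $\log(y/z)$; otherwise the new sample is rejected and there is no change. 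The supermartingale inequality for $V$ thus reduces to showing
$$I(x_1,y):=\int_{t_2}^{t_1}\log(z/x_1)\,f(z)\,dz-\int_{t_3^+}^{t_2}\log(y/z)\,f(z)\,dz\ge 0$$
uniformly in $(x_1,y)\in(0,\d_0]^2$ with $x_1\le y$, where $f$ is the density of $\zeta$ near~$0$. The hypothesis $f(z)\le f(z')$ for $0\le z<z'\le \d_0$ is essential: it bounds the negative integral by $f(t_2)$ times an explicit length-times-log factor, whereas the positive integral is bounded below by $f(y)\ge f(t_2)$ times $\log(y/x_1)$ times the length of $(t_2,t_1)$ (which is comparable to $y$ uniformly in $x_1\in[0,y]$). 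The factor $\log(y/x_1)$, which blows up as $x_1\to 0$, is what compensates the bounded negative contribution and yields $I(x_1,y)\ge 0$.

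Once $V$ is a supermartingale for $y\le \d_0$, set $\sigma:=\inf\{t: x_{(2)}(t)\ge \d_0\}$. Then $V(\XX'(t\wedge\sigma))$ is a non-negative supermartingale and converges a.s.\ to a finite limit; yet on $\{\sigma=\infty\}$ Theorem~\ref{thmpgreater}(a) forces $\XX'(t)\to 0$ and hence $V(\XX'(t))\to\infty$, a contradiction. Therefore $\sigma<\infty$ a.s.\ from every starting configuration, and by the strong Markov property the process visits $\{x_{(2)}\ge \d_0\}$ infinitely often a.s., so by Theorem~\ref{thmpgreater}(a), $\XX'(t)\to 1$ a.s. The main obstacle of this plan is the explicit quantitative verification of $I(x_1,y)\ge 0$: the inequality is sharpest for $x_1$ of moderate size relative to $y$ and for $p$ close to~$1$, and it depends essentially on the density-monotonicity hypothesis---without it the inequality genuinely fails, as demonstrated by the $\zeta$-mixture counterexample in the remark following Theorem~\ref{thmpgreater}.
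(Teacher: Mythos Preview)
Your overall strategy---find a non-negative function of the core that is a supermartingale while the core is small and that blows up as the core approaches~$0$---is exactly the paper's plan, and your reduction to showing $\P(\XX'(t)\to 0)=0$ via Theorem~\ref{thmpgreater}(a) is correct. The problem is the specific Lyapunov function you chose: $V(x_1,x_2)=-\log(x_1x_2)$ is \emph{not} a supermartingale for $p$ close to~$1$, and your sketched verification of $I(x_1,y)\ge 0$ is both non-rigorous and, in fact, false.

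Take $\zeta$ uniform (so $f$ is constant and the density-monotonicity step is vacuous), and set $x_1=y/2$. For $p=1$ the thresholds become $t_1=3y/2$, $t_2=3y/4$, $t_3^+=0$, and a direct computation gives
\[
\int_{3y/4}^{3y/2}\log\frac{z}{y/2}\,dz=\tfrac{3y}{4}(\log 6-1)\approx 0.594\,y,
\qquad
\int_{0}^{3y/4}\log\frac{y}{z}\,dz=\tfrac{3y}{4}\bigl(1+\log\tfrac{4}{3}\bigr)\approx 0.966\,y,
\]
so $I(y/2,y)<0$. By continuity the same strict inequality persists for $p=1+\eps$ with $\eps$ small (indeed for $p=1.01$ one gets $I\approx -0.15\,y$). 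Hence $V$ has strictly positive drift at such configurations and cannot serve as a supermartingale. Your heuristic that ``$\log(y/x_1)$ blows up and compensates the negative part'' only addresses the regime $x_1\ll y$; the obstruction is precisely at moderate ratios like $x_1/y=1/2$, which you yourself flag as the hard case but do not verify.

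The paper's proof uses instead
\[
h(x_1,x_2)=-2\log\Bigl(\max\bigl\{x_{(1)},\tfrac{x_{(2)}}{2}\bigr\}\Bigr),
\]
which is still monotone non-increasing in each argument and diverges as $x_{(1)}\to 0$ (since then $x_{(2)}\to 0$ as well on the event of interest). A second key device is a monotonicity-in-$p$ lemma: because the core operation $\{\cdot\}'_p$ is monotone in $p$ and $h$ is coordinatewise non-increasing, it suffices to check the supermartingale inequality at $p=1$. At $p=1$ the drift reduces to an explicit one-variable function of $\nu=x_1/x_2$ which is shown to be $\le 0$ on each of four subintervals of $(0,1]$ by elementary calculus (Lemma~\ref{lemDelta}). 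This choice of $h$ is genuinely delicate: it vanishes (in drift) at $\nu=1$, and replacing the $\max$ by a product, as you did, tips the balance the wrong way near $\nu=1/2$.
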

\begin{rema}\label{remN3}$ $
\begin{itemize}
\item In case $p\ge 3/2$ we already know that $\XX'(t)\to 1$ for any initial configuration and any distribution (see Lemma~\ref{lempN2}), so we have to prove the theorem only for $p\in(1,3/2)$.
\item
Simulations suggest that the statement of Theorem~\ref{thmnotzero} holds, in fact, for a much more general class of distributions $\zeta$.
\end{itemize}
\end{rema}

Let $\eps\in(0,1/2)$ be such that $\zeta$ conditioned\footnote{note that the full support assumption ensures that the probability of this event is positive} on $\{\zeta\le 2\eps\}$ has a non-decreasing density; according to the statement of the Theorem~\ref{thmnotzero} such an $\eps$ must exist. Let us fix this $\eps$ from now on.

The idea of the proof will be based on finding a non-negative function $h:(0,1]^2\to\R_+$ which has the following three properties: 
\begin{itemize}
\item[(i)] $h(\cdot,\cdot)$ is non-increasing in each of its arguments; \item[(ii)] $h\left(x_{(1)}(t),x_{(2)}(t)\right)$ is a supermartingale as long as $x_{(2)}(t)\le \eps$; 
\item[(iii)] $h(\cdot,\cdot)$ goes to infinity when the first coordinate goes to zero. 
\end{itemize}
From the supermartingale convergence theorem it would then follow that
$$
\P\left(\liminf_{t\to\infty} x_{(1)}(t)>0
\text{ or }
\limsup_{t\to\infty} x_{(2)}(t)\ge \eps \right)=1.
$$

Let us formally prepare for the proof of Theorem~\ref{thmnotzero}.
As before, denote by $x_1,\dots,x_N$  $N$ distinct points on $[0,1]$, and let $x_{(1)},\dots,x_{(N)}$ be this unordered $N$-tuple sorted in the increasing order. Let 
$$
\{y_1,\dots,y_{N-1}\}=\{x_1,\dots,x_N\}'_p
$$
be the unordered $N$-tuple $\{x_1,\dots,x_N\}$ with the farthest point from $p$-centre of mass removed; w.l.o.g.\ assume that $y_i$ are already in the increasing order.
\begin{lemma}\label{lemmon}
The operation $\{\dots\}'_p$ is monotone in $p$, that is, if $\hat p\ge \tilde p$ and
\begin{align*}
\{\hat y_1,\dots,\hat y_{N-1}\}&=\{x_1,\dots,x_N\}'_{\hat p},\\
\{\tilde y_1,\dots,\tilde y_{N-1}\}&=\{x_1,\dots,x_N\}'_{\tilde p}
\end{align*}
then $\hat y_i\ge \tilde y_i$, $i=1,\dots,N-1$.
\end{lemma}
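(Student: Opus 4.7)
The plan rests on the observation that the operation $\{\ldots\}'_p$ has essentially only two possible outputs (as a sorted $(N-1)$-tuple), and determining which one occurs is a simple threshold condition in $p$. The key structural fact is that the map $x\mapsto |x-p\mu|$ is convex on $\R$, so its maximum over the multiset $\{x_1,\dots,x_N\}$ is attained at one of the two extreme points $x_{(1)}$ or $x_{(N)}$. Hence after removal, the sorted core is either
$$
(y_1,\dots,y_{N-1})=(x_{(1)},x_{(2)},\dots,x_{(N-1)})\qquad\text{(if $x_{(N)}$ is removed)},
$$
or
$$
(y_1,\dots,y_{N-1})=(x_{(2)},x_{(3)},\dots,x_{(N)})\qquad\text{(if $x_{(1)}$ is removed)}.
$$

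The next step is to pin down the threshold in $p$ at which the removed point switches. If all $x_i$ coincide there is nothing to prove, so assume $x_{(1)}<x_{(N)}$; since the points lie in $[0,1]$ this forces $\mu>0$. A direct calculation gives
$$
(p\mu - x_{(1)}) - (x_{(N)} - p\mu) = 2\mu\left(p - \tfrac{x_{(1)}+x_{(N)}}{2\mu}\right),
$$
so, setting $p^\star := (x_{(1)}+x_{(N)})/(2\mu)$, the rightmost point is removed whenever $p<p^\star$ and the leftmost is removed whenever $p>p^\star$ (with a tie precisely at $p=p^\star$).

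The monotonicity claim now follows by splitting into cases depending on where $\tilde p\le \hat p$ sit relative to $p^\star$. If both lie strictly on the same side of $p^\star$, then the two operations remove the same extreme point and $\hat y=\tilde y$, so the inequality is trivial. In the remaining case $\tilde p\le p^\star\le \hat p$ one has $\tilde y_i=x_{(i)}$ and $\hat y_i=x_{(i+1)}$ for every $i=1,\dots,N-1$, and the inequality $\hat y_i\ge \tilde y_i$ is immediate from $x_{(i+1)}\ge x_{(i)}$.

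The only delicate point — and the main thing to be careful about — is the tie case $p=p^\star$, where the rule of the model randomises between removing $x_{(1)}$ and $x_{(N)}$. Since in either resolution the resulting sorted core is one of the two tuples above, both of which dominate $(x_{(1)},\dots,x_{(N-1)})$ coordinate-wise and are dominated by $(x_{(2)},\dots,x_{(N)})$, one can couple the randomisation at $\tilde p$ and at $\hat p$ so that the monotonicity $\hat y_i\ge \tilde y_i$ holds deterministically. So the lemma should be read (and proved) as the assertion that a monotone coupling of the two cores exists; under this reading the proof is complete with no further computation.
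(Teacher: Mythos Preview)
Your proof is correct and follows essentially the same approach as the paper's: both arguments reduce to the observation that the removed point is always either $x_{(1)}$ or $x_{(N)}$, and that as $p$ increases the $p$-centre of mass $p\mu$ moves to the right, so the decision can only switch from ``remove $x_{(N)}$'' to ``remove $x_{(1)}$'' and never the other way. You package this via the explicit threshold $p^\star=(x_{(1)}+x_{(N)})/(2\mu)$, while the paper argues case by case on the sign of $\tilde p\mu-x_N$; the content is the same.

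Two small remarks. First, your displayed quantity $(p\mu-x_{(1)})-(x_{(N)}-p\mu)$ is not literally the difference of the two distances when $p\mu$ lies outside $[x_{(1)},x_{(N)}]$, but the conclusion is still correct since $|x_{(1)}-p\mu|>|x_{(N)}-p\mu|$ is equivalent to $p\mu>(x_{(1)}+x_{(N)})/2$ regardless; you might say this in one line. Second, your argument is slightly more general than the paper's (which uses $|x_1-p\mu|=p\mu-x_1$, i.e.\ implicitly $p\ge 1$) and you handle the tie case via coupling more explicitly than the paper does; both are mild improvements.
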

\begin{proof}
Assume w.l.o.g.\ $x_1\leq ...\leq x_N$, and let $\mu = \mu\left(\{x_1,\dots,x_N\}\right)$. Notice that, regardless of the value of $p$,  the only points which can possibly be removed are $x_1$ or $x_N$ (since they are the two extreme points). Therefore, it suffices to show that $\{x_1,\dots,x_N\}'_{\tilde p}=\{x_2,\dots,x_N\}$ implies $\{x_1,\dots,x_N\}'_{\hat p}=\{x_2,\dots,x_N\}$.   Note also that $|x_1-p\mu|=p\mu-x_1$ for all $p\ge 1$.

If $\tilde p\mu-x_1>|\tilde p\mu-x_N|$ and $\tilde p\mu-x_N>0$, that is, the $p-$centre of mass lies to the right of $x_N$, then $\hat p\mu>\tilde p\mu>x_N$ as well,  and hence $x_1$ is discarded.

On the other hand, if $\tilde p\mu-x_1>|\tilde p\mu-x_N|$ and $\tilde p\mu<x_N$ then either $\hat p\mu<x_N$, or $\hat p\mu\ge x_N$. In the first case,
$$
\hat p\mu-x_1>\tilde p\mu-x_1>|\tilde p\mu-x_N|=x_N-\tilde p\mu
>x_N-\hat p\mu=|x_N-\hat p\mu|
$$
so $x_1$ is discarded. In the second case, $p-$centre of mass lies to the right of $x_N$ and so $x_1$ is also discarded.
\end{proof}

\begin{lemma}\label{lemsupermp}
Let $h$ be a real-valued function on the sets of $N$ real numbers. Suppose that $h$ is non-increasing in each of its arguments, namely
$$
h\left(x_1,x_2,\dots,x_{i-1},x_i',x_{i+1},\dots,x_N\right)
\le h\left(x_1,x_2,\dots,x_{i-1},x_i,x_{i+1},\dots,x_N\right)
$$
whenever $x_i'\ge x_i$. Let~${\cal E}_t$ be some $\F_t$-measurable event, and suppose that 
\begin{align}\label{eqsuper}
\E\left(h(\XX'(t+1))\|\F_t\right)\le h(\XX'(t))\text{ on }{\cal E}_t
\end{align}
for $p=1$. Then~\eqref{eqsuper} holds for $p>1$ as well.
\end{lemma}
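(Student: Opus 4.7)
The statement is really a pointwise (deterministic) comparison combined with the hypothesis for $p=1$, so no extra probabilistic input is needed beyond taking conditional expectation. Fix $\omega$, assume ${\cal E}_t$ holds, and couple the dynamics at parameters $1$ and $p>1$ by using the same configuration $\XX'(t)$ at time $t$ and the same new sample $\zeta_{t+1}$. Both processes form the same augmented $N$-tuple $\XX(t+1)=\XX'(t)\cup\{\zeta_{t+1}\}$ and then apply the removal operation: the resulting $(N-1)$-tuple is $\{\XX(t+1)\}'_1$ for the $p=1$ dynamics and $\{\XX(t+1)\}'_p$ for the $p>1$ dynamics.

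The key step is to invoke Lemma~\ref{lemmon} with $\tilde p=1\le p=\hat p$ applied to the $N$-tuple $\XX(t+1)$. This yields
$$
\bigl(\{\XX(t+1)\}'_p\bigr)_{(i)}\ \ge\ \bigl(\{\XX(t+1)\}'_1\bigr)_{(i)}\qquad\text{for every }i=1,\dots,N-1,
$$
after sorting both $(N-1)$-tuples in increasing order. Now use the hypothesis that $h$ is non-increasing in each argument: since the $p$-version core dominates the $1$-version core coordinatewise, we obtain the pointwise bound
$$
h\bigl(\{\XX(t+1)\}'_p\bigr)\ \le\ h\bigl(\{\XX(t+1)\}'_1\bigr).
$$

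Taking the conditional expectation with respect to $\F_t$ (both sides depend only on $\XX'(t)$, which is $\F_t$-measurable, and on $\zeta_{t+1}$, which is independent of $\F_t$) preserves the inequality, so on ${\cal E}_t$
$$
\E\!\left(h\bigl(\{\XX(t+1)\}'_p\bigr)\ \big|\ \F_t\right)\ \le\ \E\!\left(h\bigl(\{\XX(t+1)\}'_1\bigr)\ \big|\ \F_t\right)\ \le\ h(\XX'(t)),
$$
where the last inequality is precisely the hypothesis~\eqref{eqsuper} for $p=1$. This establishes~\eqref{eqsuper} for $p>1$. There is no substantive obstacle here; the whole content of the lemma is the monotonicity-in-$p$ statement of Lemma~\ref{lemmon}, which does the heavy lifting, together with the trivial observation that a coordinatewise-monotone function can be compared across the two coupled one-step transitions.
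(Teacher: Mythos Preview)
Your proof is correct and follows essentially the same approach as the paper's: couple the one-step transitions for $p=1$ and $p>1$ via the same sample $\zeta_{t+1}$, apply Lemma~\ref{lemmon} to obtain coordinatewise domination of the resulting cores, use the monotonicity of $h$ to compare pointwise, and then take conditional expectations and invoke the $p=1$ hypothesis. The paper's version is more terse (it packages the coupling into the map $G_p(\XX'(t),\zeta_{t+1})=\{\XX'(t)\cup\{\zeta_{t+1}\}\}'_p$), but the argument is the same.
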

\begin{proof}
Let 
$$
G_p(\XX'(t),\zeta_{t+1})=\{x_{(1)}(t),x_{(2)}(t),\dots,x_{(N-1)}(t),\zeta_{t+1}\}'_p
$$ 
be the new core after the new point $\zeta_{t+1}$ is sampled and the farthest point from the $p-$centre of mass is removed; note that $\XX'(t+1)=G_p(\XX'(t),\zeta_{t+1})$.
Then on ${\cal E}_t$
\begin{align*}
\E(h(\XX'(t+1))\|\F_t) = \E (h(G_p(\XX'(t),\zeta_{t+1}))\|\F_t)
\le
\E (h(G_1(\XX'(t),\zeta_{t+1}))\|\F_t) \le h(\XX'(t))
\end{align*}
since the operation $\{\dots\}'_p$ is monotone in $p$ by Lemma~\ref{lemmon} and $h$ is decreasing in each argument.
\end{proof}

From now on assume $N=3$ and $p=1$. Denote $x_{(1)}(t)=a$, $x_{(2)}(t)=b$ and consider the events 
$$
\begin{array}{rclrcl}
L_b&=&\left\{\zeta_{t+1}\in \left((2a-b)^+,a \right)\right\},
&
R_a&=&\left\{\zeta_{t+1} \in \left(b,2b-a \right) \right\},
\\
B_b&=&\left\{\zeta_{t+1} \in \left(a,\frac{a+b}{2} \right) \right\}, &
B_a&=&\left\{\zeta_{t+1} \in \left(\frac{a+b}{2},b \right) \right\}
\end{array}
$$
(we assume that $b$ is smaller than $1/2$, yielding $2b-a<1$.)
If $x_{(2)}(t)\le \eps$ then $\XX'(t+1)\not=\XX'(t)$ implies that one of the events $L_b$, $B_b$, $B_a$ or $R_a$ occurs (i.e. all points sampled outside of $\left((2a-b)^+,2b-a \right)$ are rejected at time $t+1$). Let us study the core $\XX'(t+1)=\{\zeta_{t+1},a,b\}'$ on these events: on $L_b$ and $B_b$ we have $\XX'(t+1)=\{x,a\}$, while on $B_a$ and $R_a$ we have $\XX'(t+1)=\{x,b\}$.

We have, assuming $x_{(1)}(t)=a$ and $x_{(2)}(t)=b$,
$$
\E(h(\XX'(t+1))-h(\XX'(t))\| \F_t)=\E (h\left( \{\zeta,a,b\}' \right)-h(a,b)).
$$
When $0\le a\le b\le \eps$ we have $2b-a\le 2\eps$.
Define
$$
g(x)=h\left( \{x,a,b\}' \right)-h(a,b)
=\begin{cases}
h(x,a)-h(a,b),&\text{if } x\in ((2a-b)^+,a);\\
h(a,x)-h(a,b),&\text{if } x\in (a,(a+b)/2);\\
h(x,b)-h(a,b),&\text{if } x\in ((a+b)/2,b);\\
h(b,x)-h(a,b),&\text{if } x\in (b,2b-a)\\
0, &\text{otherwise},
\end{cases}
$$ 
which is positive in the first two cases, and negative in the next two. Let $\ff(x)$ be the density of~$\zeta$ conditioned on $\{\zeta\in [0,2\eps]\}$. By the monotonicity of $\ff$ and the positivity (negativity resp.) of $g$ on the first (second resp.) interval,
\begin{align*}
\Delta(a,b) &:=\E \left[g(\zeta) 1_{\zeta\in [0,2\eps]}\right]=\int_{(2a-b)^+}^{\frac{a+b}{2}}g(x) \ff(x) {\rm d}x
+\int_{\frac{a+b}2}^{2b-a} g(x) \ff(x) {\rm d}x
\\
&
\le \ff\left(\frac{a+b}2\right)\int_{(2a-b)^+}^{\frac{a+b}{2}}g(x) {\rm d}x
+\ff\left(\frac{a+b}2\right)\int_{\frac{a+b}2}^{2b-a} g(x)  {\rm d}x
= \ff\left(\frac{a+b}{2}\right)\cdot \Lambda,
\end{align*}
 where 
\begin{align*}
\Lambda=&\Lambda(a,b)=\int_{(2a-b)^+}^a (h(x,a)-h(a,b)) {\rm d}x
+\int_a^{\frac{a+b}2} (h(a,x)-h(a,b)) {\rm d}x \\
& +\int_{\frac{a+b}2}^b (h(x,b)-h(a,b)) {\rm d}x
+\int_{b}^{2b-a}(h(b,x)-h(a,b)){\rm d}x.
\end{align*}
So if we can establish that $\Lambda\le 0$ for a suitable function $h$, then indeed $\Delta(a,b)\le 0$, and the supermartingale property follows.
\begin{rema}
Notice that the method of proof, presented here, could possibly work for $N>3$ as well; that is, if one can find a function $h(x_1,\dots,x_{N-1})$, which is positive and decreasing in each of its arguments, and $h(\XX'(t))$ is a supermartingale provided $\max\XX'(t)<\eps$ for some $\eps>0$. Unfortunately, however, we were not able to find such a function.
\end{rema}
Set
\begin{align}\label{eqfdefined}
h(x,y)=- 2\log \left(\max\left\{x,\frac{y}2\right\}\right)\ge 0;
\end{align}
it is easy to check $h$ is indeed monotone in each of its arguments as long as $x,y\in(0,1]$.
Let us now compute the integrals in the expression for $\Lambda$. We have
\begin{align*}
\Lambda= \begin{cases}
3(a-b)\ln 2-3a+2b
,&\text{ if }  a\le \frac b3; \\
(a+b)\ln(a+b)-(a+b)\ln a +(a-5b)\ln 2+b
,&\text{ if } \frac b3<a\le \frac b2; \\
(a+b)\ln(a+b) +(2a-4b)\ln b+3(b-a)\ln a +(b-5a)\ln 2+b
,&\text{ if } \frac b2<a \le \frac{2b}3; \\
(a+b)\ln(a+b)+(2a-4b)\ln b+(5b-7a)\ln a-(a+b)\ln2
\\
\quad +3(b-a)+(4a-2b)\ln(2a-b)
,&\text{ if } \frac{2b}3<a \le b.
\end{cases}
\end{align*}

It turns out that $h(\XX'(t))$ indeed has a non-positive drift, provided $0<a\le b\le \eps$, as is shown by the following
\begin{lemma}\label{lemDelta}
$\Lambda\le 0$ for $a,b\in(0,1/2]$.
\end{lemma}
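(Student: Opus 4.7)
\medskip
\noindent\textbf{Proposal for the proof of Lemma~\ref{lemDelta}.}
The first step is a scaling reduction. Because $h(rx,ry)=h(x,y)-2\log r$ for any $r>0$, every integrand in $\Lambda$ is a difference of $h$-values and is therefore unchanged by the substitution $(a,b)\mapsto(a/b,1)$; since each domain of integration has length proportional to $b$, one obtains the homogeneity relation
$$
\Lambda(a,b)=b\,\Lambda(a/b,\,1).
$$
Hence it suffices to establish the one-variable statement $\Lambda(t,1)\le 0$ for $t\in(0,1]$, which after setting $b=1$ yields four explicit elementary functions of $t$ on the subintervals $(0,1/3]$, $(1/3,1/2]$, $(1/2,2/3]$, $(2/3,1]$.

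Before dealing with each piece, I would verify that the four expressions agree at the breakpoints $t=1/3,1/2,2/3$ (a useful sanity check on the piecewise formula) and record the boundary values $\Lambda(1/3,1)=1-2\ln 2$, $\Lambda(1/2,1)=\tfrac32\ln 3-\tfrac92\ln 2+1$, $\Lambda(2/3,1)$, and $\Lambda(1,1)=0$, all of which are numerically non-positive. For the first three subintervals $(0,1/3]$, $(1/3,1/2]$, $(1/2,2/3]$, the function $\Lambda(t,1)$ stays uniformly bounded away from $0$, so a single derivative test will do: compute $\tfrac{d}{dt}\Lambda(t,1)$, show it has constant sign (or only one zero) on the subinterval, and combine with the endpoint values.

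The delicate case is the last subinterval $(2/3,1]$, because $\Lambda(1,1)=0$ and a direct computation shows $\Lambda'(1,1)=0$ as well, so the inequality is sharp at the right endpoint. The plan is to differentiate the formula
$$
\Lambda(t,1)=(t{+}1)\ln(t{+}1)+(5-7t)\ln t-(t{+}1)\ln 2+3(1-t)+(4t-2)\ln(2t-1)
$$
once more, and verify that $\Lambda''(t,1)\ge 0$ on $(2/3,1]$. This would imply $\Lambda'(t,1)$ is non-decreasing, hence $\Lambda'(t,1)\le \Lambda'(1,1)=0$, so $\Lambda(t,1)$ is non-increasing on $(2/3,1]$; together with $\Lambda(1,1)=0$ this gives the required inequality. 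A fallback in case $\Lambda''$ is too unwieldy is the substitution $s=1-t\in[0,1/3)$: expanding in powers of $s$ the two vanishing leading terms cancel, and the remainder can be bounded by standard series estimates for $\ln(1-s)$ and $\ln(1-2s)$.

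The main obstacle will be precisely this last subinterval, where the touching of zero at $t=1$ forces one to work at second order; the three other pieces are essentially bookkeeping with $\log$-identities. Throughout the computation I expect to rely repeatedly on the fact that $t\ln t\to 0$ as $t\to 0^+$ (which keeps the integrand near $t=0$ in the first piece integrable) and on elementary bounds such as $\ln(1+x)\le x$ and $(1+x)\ln(1+x)\ge x$.
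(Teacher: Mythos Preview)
Your scaling reduction $\Lambda(a,b)=b\,\Lambda(a/b,1)$ is exactly what the paper does (it writes $a=b\nu$), and the first three subintervals will indeed yield to routine calculus. The genuine gap is in your plan for the last piece $(2/3,1]$. First, the sign of your second-order argument is backwards: if $\Lambda''\ge 0$ and $\Lambda(1)=\Lambda'(1)=0$, then convexity gives $\Lambda(t)\ge \Lambda(1)+\Lambda'(1)(t-1)=0$, which is the \emph{wrong} inequality. Second, and more seriously, the premise itself is false: one computes
\[
\Lambda''(t,1)=\frac{1}{t+1}-\frac{7}{t}+\frac{8}{2t-1}-\frac{5}{t^2},
\]
which is positive at $t=2/3$ but equals $-7/2$ at $t=1$; in fact $\Lambda''$ changes sign exactly once, at $t=1/\sqrt{2}$. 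So neither convexity nor concavity holds on the whole subinterval, and your Taylor-expansion fallback near $t=1$ cannot, by itself, reach back to $t=2/3$.

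The paper exploits precisely this single sign change of $\Lambda''$: since $\Lambda''$ goes from positive to negative, $\Lambda'$ is unimodal (first increasing, then decreasing), so its \emph{maximum} on $[2/3,1]$ is attained at an endpoint. One checks $\Lambda'(1,1)=0$ and $\Lambda'(2/3,1)=\ln(256/45)-5/2<0$, hence $\Lambda'\le 0$ throughout; then $\Lambda$ is non-increasing and $\Lambda(1,1)=0$ gives\dots wait, this yields $\Lambda(t,1)\ge 0$ again. The correct resolution is to work with $C_3(t):=-\Lambda(t,1)$: the sign change of $C_3''$ is from negative to positive, so $C_3'$ attains its maximum at the endpoints, both of which give $C_3'\le 0$; thus $C_3$ is non-increasing with $C_3(1)=0$, whence $C_3\ge 0$ and $\Lambda\le 0$. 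You should rework the last subinterval along these lines.
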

\begin{proof}
Substitute $a=b\nu$ in the expression for $\Lambda$. Then for $\nu\le 1/3$ we easily obtain
$
\Lambda=-b\left[3\nu(1-\ln 2) + \ln 8 -2\right]\le 0.
$

For $1/3<\nu\le 1/2$ we have $2\Lambda=-b C_1(\nu)\le 0$ where
$$
C_1(\nu)=(1+\nu) \ln\frac\nu{1+\nu} +(5-\nu)\ln 2-1>0,
$$
since  $\frac{\partial^2 C_1(\nu)}{\partial^2 \nu}=-\frac1{\nu^2(1+\nu)}<0$ and hence $\min_{1/3\le\nu\le 1/2}C_1(\nu)$  is achieved  at one of the endpoints $\nu=1/3$ or $\nu=1/2$; the values there are $C_1(1/3)=\ln (4)-1>0$ and $C(1/2)=\frac12 \ln\left(\frac{512}{27}\right)  -1>0$  respectively.

For $1/2<\nu \le 2/3$ we have
$\Lambda=-b C_2(\nu)\le 0$ where
$$
C_2(\nu)=-(1+\nu)\ln(1+\nu)+(3\nu-3)\ln\nu-1+(5\nu-1)\ln 2>0,
$$
since $\frac{\partial^2 C_2(\nu)}{\partial^2 \nu}=\frac{2\nu^2+6\nu+3}{\nu^2(1+\nu)}>0$ and 
$\left.\frac{\partial C_2(\nu)}{\partial \nu}\right|_{\nu=2/3}=\ln\left(\frac{256}{45}\right)-\frac 52<0$
implies that $\frac{\partial C_2(\nu)}{\partial \nu}<0$ for all $\nu\in[1/2,2/3]$ and hence $\min_{1/2\le \nu\le 2/3} C_2(\nu)=C_2(2/3)=\frac13\ln\left(\frac{104976}{3125}\right)-1>0.$

Finally, for $2/3<\nu \le 1$ we have $\Lambda=-bC_3(\nu)\le 0$,
where 
$$
C_3(\nu)=
\nu
\log
\frac{2\nu^7}{(2\nu-1)^4(\nu+1)}
+\log
\frac{2  (2\nu-1)^2}{\nu^5(\nu+1)}+3(\nu-1)
>0
$$
since 
$$
\frac{\rmd^2 C_3(\nu)}{\rmd \nu^2}=
\frac{(2\nu+5)(2\nu^2-1)}{(2\nu-1)\nu^2(\nu+1))}
$$
changes its sign from $-$ to $+$ at $1/\sqrt 2\in(2/3,1)$ and therefore
$\frac{\partial C_3(\nu)}{\partial \nu}$ achieves its maximum at the endpoints of the interval; thus
$$
\max_{2/3\le\nu\le 1} \frac{\partial C_3(\nu)}{\partial \nu}=
\max_{\nu=2/3,1}\frac{\partial C_3(\nu)}{\partial \nu}
=\max\left\{-\frac 52+\ln\left(\frac{256}{45}\right),0\right\}=0
$$
Therefore, $C_3(\nu)$ is decreasing and hence 
$\min_{2/3\le\nu\le 1} C_3(\nu)=C_3(1)=0$.
\end{proof}

\begin{proof}[Proof of Theorem~\ref{thmnotzero}]
We will show that $\P(\XX'(t)\to 0)=0$, which will imply by Theorem~\ref{thmpgreater}(a) that $\P(\XX'(t)\to 1)=1$; we shall do it by showing that
\begin{align}\label{eqbyreferee}
0\le \P(\XX'(t)\to 0)\le \P\left(\left\{\liminf_{t\to\infty} x_{(1)}(t)=0 
\right\}\bigcap\left\{\limsup_{t\to\infty} x_{(2)}(t)<\eps
\right\}\right)=0.
\end{align}
Indeed, fix some $\eps\in(0,1/2)$ and let $\tau_0=0$. For $\ell=1,2,\dots,$ define the sequence of stopping times
\begin{align*}
\eta_\ell&=\inf\{t>\tau_{\ell-1}:\ x_{(2)}(t)\le \eps\},\\
\tau_\ell&=\inf\{t>\eta_{\ell}:\ x_{(2)}(t)> \eps\},
\end{align*}
so that $\tau_0<\eta_1<\tau_1<\eta_2<\tau_2<\dots$
with the conventions that if one of the stopping times is infinite, so is the rest of them. Define also $\ell^*=\inf\{\ell\ge 1:\ \tau_\ell=+\infty\}$. 

If $\ell^*=\infty$, that is, $\tau_\ell<\infty$ for all $\ell$, we immediately have $\limsup_{t\to\infty} x_{(2)}(t)\ge \eps$ and we are done; so assume that for some $\ell^*\ge 1$ we have $\tau_{\ell^*-1}<\infty=\tau_{\ell^*}$. If $\eta_{\ell^*}=\infty$, then $x_{(2)}(t)>\eps$ for all $t\ge \tau_{\ell^*-1}$ and thus again $\limsup_{t\to\infty} x_{(2)}(t)\ge \eps$; hence $\ell^*<\infty$ and $\eta_{\ell^*}<\infty$ on the event 
$\left\{\limsup_{t\to\infty} x_{(2)}(t)<\eps
\right\}$.

On the other hand, as long as $\eta_\ell<\infty$, we can define 
$$
\xi^{(\ell)}_s=h\left(\XX'(\eta_\ell+s)\right), \quad s\ge 0
\qquad\text{and}\ \tilde\xi^{(\ell)}_s=\xi^{(\ell)}_{\min\{s,\tau_\ell-\eta_\ell\}}.
$$
where $h$ is given by~\eqref{eqfdefined}. 

By Lemmas~\ref{lemsupermp} and~\ref{lemDelta} we have
\begin{align*}
\E(\left[ h(\XX'(t+1))-h(\XX'(t))\right] 1_{x_{(2)}(t)\le\eps} \|\F_t )\le 0,
\end{align*}
hence $\tilde\xi^{(\ell)}_s$, the process $\xi^{(\ell)}$ stopped at the time when $x_{(2)}$ exceeds $\eps$, is a non-negative supermartingale, hence it must converge to a finite value. In case $\tau_\ell=+\infty$ this means  that
$\liminf_{t\to\infty} x_{(1)}(t)>0$ since the function $h(a,b)$ goes to infinity when $a\downarrow 0$. Thus we have established~\eqref{eqbyreferee}.
\end{proof}

\section{ Appendix: The calculations for the proof of Lemma~\ref{lemMod}.}
Observe that all expressions for ${\bf A}_j$ are fractions of the polynomials in $(a,f,p,M)$; moreover, their denominators 
\begin{align*}
\begin{array}{ll}
3M(M-1) &\text{ (for ${\bf A}_1$)}, \\
3M(M-1)(M+1-2p)^3 &\text{ (for ${\bf A}_2$ and ${\bf A}_4$)},\\
12M(M-1)(M+1-2p)^3 p^3 &\text{ (for ${\bf A}_3$ and ${\bf A}_5$)}
\end{array}
\end{align*} 
are always positive. Throughout the rest of the proof let $\nn(w)$ denote the numerator of such a fraction $w$.

\subsubsection*{Case 1: ${\bf I}_1\le 0$}
Observe that
\begin{align*}
\nn({\bf A}_1)&=-2M^2-3M\mu+2 M+1+[3M\mu-1]Mp
\end{align*}
and the term in the square brackets is positive as $M\mu\ge 1$, so the maximum of $\nn({\bf A}_1)$ is achieved at the highest possible value of $p$. However, in this case we have $p\le p_1$, hence
\begin{align*}
\nn({\bf A}_1) 1_{X_1\le 0}\le 
\nn({\bf A}_1)|_{p=p_1}&=
-\frac{s_1}{ 2\mu }
\end{align*}
where
$$
s_1=
(M^2-2)\mu+(1-6\mu)(1-\mu)M+1=
\begin{cases}
3(2\mu-1)^2,\text{ if }M=2;\\
4\mu^2+1/2+14 (\mu-1/2)^2 ,\text{ if }M=3;\\
(M-3)[(M-4)\mu+6\mu^2+1]+s_1|_{M=3}, \text{ if }M\ge 4
\end{cases}
$$
Hence $s_1\ge 0$ for  $M=2,3,\dots$ and thus ${\bf I}_1\le 0$.

\subsubsection*{Case 2: ${\bf I}_2\le 0$}
Here
$$
\nn({\bf A}_2)=-4 \left[a(M -p+1)-M\mu p \right]^2   s_2
$$
where 
\begin{align*}
s_2&= M^3\mu p-4 M^2\mu p^2-M^3 a+2 M^2 a p+5 M^2\mu p+2 M a p^2-3 M^2\mu-6 M a*p+4 M\mu p
\\ & +3 M a-3 M\mu-2 a p+2 a,
\end{align*}
and we need to show that $s_2\ge 0$.

Assume first $M=2$. Then (using the fact that $\mu=(1+a)/2$)
\begin{align*}
X_2\ge 0 \Longleftrightarrow p\ge \frac{3a+3}{4a+2}\ge 1
\end{align*}
which is impossible; so from now on $M\ge 3$.

To establish ${\bf I}_2\le 0$, it will suffice to demonstrate that
$$
s_3:=2 M  s_2-2 M^3(M\mu+a) X_2\ge 0
$$ 
as ${\bf I}_2$ has a factor $1_{X_2\ge 0}$, and 
$s_2  1_{X_2\ge 0} \ge \frac{s_3}{2M}  1_{X_2\ge 0}$.
Substituting 
$$
p=\left[ \frac12+\frac 1{2M}\right]
+	\left[ \frac12-\frac 1{2M}\right]w
$$
where $w\in[0,1)$ corresponding to the condition~\eqref{pcond}, we get
\begin{align*}
s_3&=
M \left(-2 {M}^2\mu {w}^2+{M}^2\mu w+4 M\mu {w}^2+{M}^{3
}-3 {M}^2\mu-M\mu w-2 \mu {w}^2+{M}^2-M\mu+2 \mu \right) 
\\ &
- a\cdot (M-1) \left[
M \left(\left(M-1 \right)^2- \left(w-2 \right)^2 \right) +
 (1-w)  \left(M^2-w-1 \right) 
 \right] 
\end{align*}
The expression in the square brakets is non-negative for $M\ge 3$, so the minimum of $s_3$ is achieved when $a=1$; i.e.
\begin{align*}
s_3&\ge s_3|_{a=1}=
-2M^3\mu w^2+M^3\mu w+4 {M}^2\mu {w}^2-3 {M}^3\mu+{M}^3w-{M}^2\mu w+{M}^2{w}^2
\\ &
-2M\mu w^2+3M^3
-M^2\mu-5 M^2 w-2 M w^2+2 M^2+2 M\mu+4 Mw+w^2-2M-1=:s_4
\end{align*}
But

\begin{align*}
\frac{\partial s_4}{\partial\mu}
=-M \left((3-w)M^2+(1+w)M-2+2(M-1)^2 w^2\right) <0
\end{align*}
so
\begin{align*}
s_4 \ge s_4|_{\mu=1}&= 
(1-w)(M-1)(wM(2 M -3)+M+w+1)\ge 0.
\end{align*}

\subsubsection*{Case 3: ${\bf I}_3\le 0$}
Here
$$
\nn({\bf A}_3)= - (M+1) (1-a)   s_5
$$
and it suffices to show that $s_5\ge 0$.
If $M=2$, then $\mu=(a+1)/2$ and $p\ge 3/4$, so
\begin{align*}
s_5=3(3-2p)
&\left[(1-a)^2(8p-5)+(32(1-a)^2+144a)(1-p)^4\right.
\\ &
\left. +12(1-p)^2 (4p+a(4ap+10p-3))\right]\ge 0.
\end{align*}
For $M\ge 3$, let $M=3+\delta$, $\delta=0,1,\dots$. Then
$
s_5=\sum_{i=0}^5 e_{i+1} \delta^i
$
where we will show that all $e_i\ge 0$. Indeed, we have
\begin{align*}
e_1&=
-432a\mu p^5-1296\mu^2 p^5+288a^2 p^4+2736a
\mu p^4-144a p^5+432 p^4\mu^2-432\mu p^5-
1632 p^3a^2
 \\ &
-2880a\mu p^3+1200a p^4+4320\mu^{2
} p^3+2736\mu p^4+2624 p^2a^2-1152a\mu p^2-
3744a p^3-1728\mu^2 p^2
 \\ &
-2880\mu p^3+288 p^4-1536pa^2+4928 p^2a-1152 p^2\mu-1632 p^3+768a^2-3072ap+2624 p^2
\\ &
+1536a-1536p+768
\\
e_2&=-288a\mu p^5-1296\mu^2 p^5+168a^2 p^4+2208a
\mu p^4-48a p^5-360 p^4\mu^2-288\mu p^5-1160
 p^3a^2
  \\ &
-1392a\mu p^3+600a p^4+6984\mu^2 p^3+2208\mu p^4+1760 p^2a^2-3840a\mu p^2-2264
a p^3-2016\mu^2 p^2
 \\ &
-1392\mu p^3+168 p^4 -576pa^2+2720 p^2a-3840 p^2\mu-1160 p^3+768a^2-1152ap+1760 p^2
\\ &
+1536a-576p+768
\\
e_3&=
-48a\mu p^5-432\mu^2 p^5+24a^2 p^4+576a\mu
 p^4-600 p^4\mu^2-48\mu p^5-268 p^3a^2+
216a\mu p^3+72a p^4
  \\ & 
+4404\mu^2 p^3+576\mu p^4+324 p^2a^2-3240a\mu p^2-412a p^3-876\mu^2 p^2+216\mu p^3+24 p^4+336pa^2
 \\ &
+180 p^2a-
3240 p^2\mu-268 p^3+288a^2+672ap+324 p^2+576a
+336p+288
\\
e_4&=
-48\mu^2 p^5+48a\mu p^4-216p^4\mu^2-20p^3 a^2+192a\mu p^3+1356\mu^2 p^3+48\mu p^4-4 p^2 a^2-1164 a\mu p^2
 \\ &
-20ap^3-168\mu^2 p^2+192\mu p^3+228pa^2-112p^2 a-1164 p^2\mu -20p^3+48a^2+456ap-4p^2
\\ &
+96a+228p+48
\end{align*}
\begin{align*}
e_5&=-24  p^4\mu^2+24 a\mu  p^3+204 \mu^2 p^3
-4p^2a^2-192 a\mu  p^2-12 \mu^2 p^2+24 \mu  p^3+45 pa^2-16  p^2a
   \\ &
-192  p^2\mu+3 a^2+90 ap-4  p^2+6a+45p+3
\\
e_6&=360 p (a+1-2\mu p)^2\ge 0.
\end{align*}
The fact that $e_6\ge 0$ is trivial; we will prove separately that $e_1,\dots,e_5\ge 0$ below. In what follows, we substitute $p=\frac{1+\nu}2$, where $\nu\in(0,1)$.
\subsubsection*{Proof that $e_1\ge 0$}
We have
\begin{align*}
\frac{\partial^2 e_1}{\partial a^2}&=4  [9\nu^4-66\nu^3+76\nu^2+2\nu+235] >0 ,
\end{align*}
hence $e_1$ achieves its minimum at 
$$
a_{cr}=\frac {9\nu^5-105\nu^4+426\nu^3-46\*nu^2+397\nu-1669+9\mu(1+\nu)^2(3\nu^3-29\nu^2+13\nu+109)} {8[9\nu^4-66\nu^3+76\nu^2+2\nu+235]}
$$
which solves $\frac{\partial e_1}{\partial a}=0$. Note that it is possible that $a_{cr}\not\in [0,1]$.
However, in any case,
\begin{align*}
e_1\ge e_1|_{a=a_{cr}}=\frac1{32} \cdot \frac { 3(1+\nu)^2 c_1 }{9\nu^4-66\nu^3+76\nu^2+2\nu+235}
\end{align*}
so it will suffice to show that 
\begin{align*}
c_1&=16(1-\nu)^2 c_{1a}+3(1-\mu)c_{1b},\quad\text{where}
\\
c_{1a} &=-27\nu^6+144\nu^5-102\nu^4+1620\nu^3-9883\nu^2+12484\nu+1732
\\
c_{1b}&=81\mu\nu^8-108\mu\nu^7+135\nu^8-1260\mu\nu^6-828\nu^7-12276\mu\nu^5+276\nu^6+84774\mu\nu^4
\\ &
-4404\nu^5-157140\mu\nu^3+69170\nu^4+152628\mu\nu^2-198372\nu^3-156108\mu\nu+182084\nu^2
\\ &
+27969\mu-60588\nu+73967
\end{align*}
is positive. We have
$$
c_{1a}=3\nu^3(540-9\nu^3+48\nu^2-34\nu)+\nu(12484-9883\nu)+1732>0.
$$
Similarly,
\begin{align*}
c_{1b}&=61440(1-\mu)+(1-\nu)[c_{1b1}+c_{1b2}\mu]
\end{align*}
where
\begin{align*}
c_{1b1}&=(-135\nu^7+693\nu^6+417\nu^5+4821\nu^4)-64349\nu^3+134023\nu^2-48061\nu+12527
\\ &
\ge -64349\nu^3+134023\nu^2-48061\nu+12527\ge 1000  (-67\nu^3+134\nu^2-67\nu+12)
\\ &
=\frac{1000}{27}\left[ 56+ 67(4-3\nu) (1-3\nu)^2\right]>0
\end{align*}
and
\begin{align*}
c_{1b2}&=(-81\nu^7+27\nu^6+1287\nu^5+13563\nu^4)-71211\nu^3+85929\nu^2-66699\nu+894
\\ &
\ge -71211\nu^3 +85929\nu^2-66699\nu+89409> 80000  (-\nu^3+\nu^2-\nu+1)\ge 0.
\end{align*}
So, $c_{1b1},c_{1b2}>0$ $\Longrightarrow$ $c_{1b}>0$ and since $c_{1a}>0$ we have $c_1\ge 0$ and thus $e_1\ge 0$.

\subsubsection*{Proof that $e_2\ge 0$}
We have
\begin{align*}
\frac{\partial^2 e_2}{\partial a^2}&=21\nu^4-206\nu^3+136\nu^2+398\nu+1571>0
\end{align*}
so, similarly to the previous case,
$$
e_2\ge e_2|_{a=a_{cr}}=\frac {3(1+\nu^2)[582912(1-\mu)^2+(1-\nu)c_2]}
{8[21\nu^4-206\nu^3+136\nu^2+398\nu+1571]}
$$
where
$$
a_{cr}=\frac {3 \nu^5-60 \nu^4+296 \nu^3-82 \nu^2-155 \nu-2786+
 \left(18 \nu^3-222 \nu^2-150 \nu+2010 \right)  \left(1+
\nu \right)^2\mu}{2[21\nu^4-206\nu^3+136\nu^2+398\nu+1571]}
$$
solves $\frac{\partial e_2}{\partial a}=0$ and
\begin{align*}
c_2 &=3 \nu^{7}-123 \nu^{6}+1330 \nu^5-1918 \nu^4-28897 {
\nu}^3+65177 \nu^2+93100 \nu+120544
\\ &
+ \left(36 \nu^{7}-624 \nu^{6}+348 \nu^5+25616 \nu^4-7332 \nu^3-272368 \nu^2-134556 \nu+688784 \right) 
 \\ &
 + \left(108 \nu^{7}-72 \nu^{6}-4848 \nu^5-35916 \nu^4+247548 \nu^3-252720 \nu^2+144456 \nu-647676 \right) {\mu}^2
\end{align*}
Now, 
\begin{align*}
\frac{\partial^2 c_2}{\partial \mu^2}&=
\nu^4(216\nu^3-144\nu^2-9696\nu-71832)+\nu^3(495096\nu-505440)+(288912\nu-1295352)<0
\end{align*}
hence the minimum of $c_2$ w.r.t.\ $\mu\in[0,1]$ can be achieved either at $\mu=0$ or at $\mu=1$.
At the same time
\begin{align*}
c_2|_{\mu=0}&=3\nu^7+1330\nu^5+65177\nu^2+93100\nu+(120544-123\nu^6-1918\nu^4-28897\nu^3) >0,
\\
c_2|_{\mu=1}&=(1-\nu)(
161652-147\nu^6+672\nu^5+3842\nu^4+16060\nu^3+(264652-195259\nu)\nu)\ge 0,
\end{align*}
so $c_2\ge 0$ and hence $e_2\ge 0$.

\subsubsection*{Proof that $e_3\ge 0$}
We have
\begin{align*}
\frac{\partial^2 e_3}{\partial a^2}&=3\nu^4-55\nu^3-21\nu^2+471\nu+1010>0
\end{align*}
so, similarly to the previous case,
$$
e_3\ge e_3|_{a=a_{cr}}=
\frac{3(1+\nu)^2 \left[(1-\nu)^2 c_{3a}+(1-\mu)c_{3b}\right]}{8(3\nu^4-55\nu^3-21\nu^2+471\nu+1010)}
$$
where
$$
a_{cr}=
\frac{-9\nu^4+67\nu^3+165\nu^2-579\nu-1820+3(1+\nu)^2 \mu(\nu^3-21\nu^2-63\nu+499)}{2(3\nu^4-55\nu^3-21\nu^2+471\nu+1010)}
$$
solves $\frac{\partial e_3}{\partial a}=0$ and
\begin{align*}
c_{3a} &=-3\nu^6+12\nu^5+632\nu^4+1794\nu^3-37624\nu^2+65244\nu+64877>0
\\
c_{3b} &=2(1-\nu)(-3\nu^7+12\nu^6+652\nu^5+2417\nu^4-42561\nu^3+73864\nu^2+41336\nu+91323)
\\ &
+(1-\mu) (3\nu^6(220-\nu^2+4\nu)+2\nu(1490\nu^4-22993\nu^3+39898\nu^2+890\nu+109262)+8477)\ge 0
\end{align*}
Hence $e_3\ge 0$.

\subsubsection*{Proof that $e_4\ge 0$}
We have
\begin{align*}
\frac{\partial^2 e_4}{\partial a^2}&=
209\nu+317-5\nu^3-17\nu^2>0
\end{align*}
so, similarly to the previous case,
$$
e_4\ge e_4|_{a=a_{cr}}=\frac{3(1+\nu)^2[ (1-\nu)^2 c_{4a}+4(1-\mu)c_{4b}]}
{8(209\nu+317-5\nu^3-17\nu^2)}
$$
where
$$
a_{cr}=\frac{5\nu^3+71\nu^2-329\nu-587+  6\mu(1+\nu)^2(88-10\nu-\nu^2)}
{2(209\nu+317-5\nu^3-17\nu^2)}
$$
solves $\frac{\partial e_4}{\partial a}=0$ and
\begin{align*}
c_{4a}&=8\nu^4+40\nu^3-1395\nu^2+4354\nu+4757>0
\\
c_{4b}&=4(1-\nu) (4\nu^5+21\nu^4-712\nu^3+2011\nu^2+3102\nu+3050)
\\ & 
+(1-\mu)(2\nu^6+11\nu^5-360\nu^4+912\nu^3+1705\nu^2+3655\nu+543)\ge 0.
\end{align*}
Hence $e_4\ge 0$.

\subsubsection*{Proof that $e_5\ge 0$}
We have
\begin{align*}
\frac{\partial^2 e_5}{\partial a^2}&=49+41\nu-2\nu^2>0
\end{align*}
so, similarly to the previous case,
$$
e_5\ge e_5|_{a=a_{cr}}=\frac{3(1+\nu)^2[ (1-\nu)^2 c_{5a}+(1-\mu)c_{5b}]}{2(49+41\nu-2\nu^2)}
$$
where
$$
a_{cr}=\frac{4\nu^2-37\nu-47+3\mu (15-\nu)(1+\nu)^2}{49+41\nu-2\nu^2}
$$
solves $\frac{\partial e_5}{\partial a}=0$ and
\begin{align*}
c_{5a}&=15-\nu^2+15\nu>0
\\
c_{5b}&=2(1-\nu) (14\nu^2+28\nu+19-\nu^3)
+
(1-\mu)(13\nu^3+40\nu^2+49\nu+11-\nu^4)\ge 0.
\end{align*}
Hence $e_5\ge 0$.
\vskip 5mm
As a result, $s_5\ge 0$ and thus ${\bf I}_3\le 0$.

\subsubsection*{Case 4: ${\bf I}_4\le 0$}
Here
\begin{align*}
\nn({\bf A}_4)&=-4(M\mu p-M+p-1)^2 s_6,\\
s_6 &=2 p-2+(3\mu+6p-3-4\mu p-2p^2)M+(4\mu p^2-5\mu p+3\mu-2p)M^2+(1-\mu p)M^3
\end{align*}
Then, substituting $M=2+\d$,
\begin{align*}
\frac{\partial s_6}{\partial \d}&=
[5(1-\mu)+2(1-p)(p+2+10\mu-8\mu p)]
+
[8(1-\mu)+2(1-p)(2+7\mu-4\mu p)]\d\ge 0
\end{align*}
and as a result for $\d\ge 0$ we have
$$
s_6\ge s_6|_{\d=0}=2(3-2p)[p(1-\mu)+\mu(1-3p)]\ge 0.
$$

\subsubsection*{Case 5: ${\bf I}_5\le 0$}
Here
$$
\nn({\bf A}_5)=-s_7.
$$
We need to show that $s_7\ge 0$ when $X_1\le 0$ and $X_3\ge 0$.

Since $X_1\le 0$, we have $2Mp\mu\le M+1$. 
Together with $X_3\ge 0$ this implies
$$
0\le \nn(X_3)=2 M p\mu-(M+1)-a(M+1)+2p
\le -a(M+1)+2p
$$
whence
$$
a\le \frac{2p}{M+1}.
$$
Let us show that for this $a$ we have $s_7\ge 0$; substitute  $a=b\cdot \frac{2p}{M+1}$, where $b\in[0,1]$.

First, let \boxed{$M=2$}\,, then $\mu=\frac{1+a}2$, $p\in [3/4,1)$, and
$s_7=\frac{3-2p}{27} s_8$ where
\begin{align*}
s_8 &=
512 b^3 p^8-2688 b^3 p^7+5760 b^3 p^6+3456 b^2 p^7-6912 b^3 p^5-12672 b^2 p^6+5184 b^3 p^4
\\ &
+16416 b^2 p^5+5184 b p^6-1944 b^3 p^3-11664 b^2 p^4-10368 b p^5+7776 b^2 p^3+1728 p^5-2916 b^2 p^2
\\&
+11664 b p^3-11664 b p^2-7776 p^3+4374 b p+17496 p^2-17496 p+6561
\end{align*}
Note that we can write $s_8=e_1+e_2 (1-p) +e_3 (1-p)^2$, where
\begin{align*}
128  e_1&=
(9- \nu^2-6\nu)(81-\nu^3-9\nu^2-63\nu)(\nu^3+15\nu^2+81-9\nu)
>0
\\
128  e_2&=3(9-\nu^2)(\nu^6+21\nu^5+168\nu^4+666\nu^3+81\nu^2+81\nu+486)
>0
\\
64(e_1+e_3)&=
[2\nu^8+33\nu^7+234\nu^6+783\nu^5]+
[-648\nu^4-6561\nu^3+30618\nu^2-28431\nu+13122]
\\ &
 \ge -648\nu^4-6561\nu^3+30618\nu^2-28431\nu+13122
\\  & 
  \ge -1000\nu^4-7000\nu^3+24000\nu^2-29000\nu+13000
\\ &
= 1000(1-\nu)(5 +8(1-\nu)^2 +\nu^3) \ge 0.
\end{align*}
with  $p=\frac{3+\nu}4$, $\nu\in [0,1]$. Consequently, since $(1-p)^2<1$ and $e_1>0$,
$$
s_8=e_1+e_2 (1-p) +e_3 (1-p)^2\ge e_2 (1-p) +(e_1+e_3) (1-p)^2
\ge 0
$$
and thus $s_7\ge 0$ as required.
\vskip 5mm
For $\boxed{M\ge 3}$\,, set $M=3+\d$, $\d\ge 0$. Then
$$
s_7=\sum_{i=0}^{9} e_{i+1}\d^i
$$
where
\begin{align*}
e_1&=196608+ (98304 b-393216) p+ (-49152 b^2-442368 \mu^2-196608 b-737280 \mu+589824) p^2
\\ &
+
 (-24576 b^3+221184 b\mu^2+98304 b^2+1990656 \mu^2+294912 b+663552 \mu-540672) p^3
\\ &
+ (49152 b^3+73728 b^2\mu-552960 b\mu^2-331776 \mu^3-147456 b^2-2322432 \mu^2-270336 b
\\ & \ \ 
+110592 \mu+233472) p^4
\\ &
+ (-83968 b^3+184320 b^2\mu-55296 b\mu^2+774144 \mu^3+135168 b^2+1050624 \mu^2+116736 b
\\ & \ \
-239616 \mu-36864) p^5
\\ &
+ (52224 b^3-175104 b^2\mu+165888 b\mu^2-331776 \mu^3-58368 b^2-165888 \mu^2-18432 b+55296 \mu) p^6
 \\ & + (-9216 b^3+27648 b^2
\mu+9216 b^2) p^7
\end{align*}
\begin{align*}
e_2&=393216+ (172032 b-540672) p+ (-73728 b^2-958464 \mu^2-221184 b-2088960 \mu+835584) p^2
\\ &
+(-30720 b^3+423936 b\mu^2+86016 b^2+4589568 \mu^2+344064 b+1898496 \mu-823296) p^3
\\ &
+ (30720b^3+282624 b^2\mu-1308672 b\mu^2-663552 \mu^3-135168b^2-5031936\mu^2-344064 b
\\ & \ \
-18432 \mu+344064)p^4
\\ & 
 + (-77312 b^3+181248 b^2\mu+4608 b\mu^2+1658880 \mu^3+138240 b^2+2068992 \mu^2+142848b
\\ & \ \ 
 -360960 \mu-49152) p^5
\\ & 
 + (50176 b^3-228864 b^2\mu+290304 b\mu^2-691200 \mu^3-56832 b^2-290304\mu^2-19968 b+78336 \mu) p^6
\\ &
+ (-7680 b^3+32256 b^2\mu+7680 b^2) p^7
\end{align*}
\begin{align*}
e_3&=344064+ (129024 b-208896) p+ (-46080 b^2-906240 \mu^2-49152 b-2558976 \mu+430080) p^2
\\ &
+ (-15360 b^3+347136 b\mu^2+3072b^2+4718592\mu^2+129024 b+2217984 \mu-522240) p^3
\\ &
+ (-6144 b^3+334848 b^2\mu-1337856 b\mu^2-566784 \mu^3-30720 b^2-4778496 \mu^2-175104 b
\\ & \ \ 
-198144 \mu+210432) p^4
\\ &
+ (-24448 b^3+42240 b^2\mu+100992 b\mu^2+1543680 \mu^3+52992 b^2+1744512 \mu^2+69504 b\\ & \ \
-223872 \mu-26112) p^5
\\ &
+ (17856 b^3-118464 b^2\mu+210816 b\mu^2-615168 \mu^3-20544 b^2-210816 \mu^2-8064 b+44160 \mu) p^6
\\ &
+ (-2112 b^3+14016 b^2\mu+2112 b^2) p^7
\end{align*}
\begin{align*}
e_4&=172032+ (53760 b+64512) p+ (-15360 b^2-488448 \mu^2+44544 b-1790976 \mu+64512) p^2
\\&
+ (-3840 b^3+157440 b\mu^2-23040 b^2+2843904 \mu^2+1416960 \mu-176640) p^3
\\&
+ (-9984 b^3+193536 b^2\mu-772608 b\mu^2-268032 \mu^3+7680 b^2-2598912 \mu^2-44544 b
\\ & \ \
-170496 \mu+68352) p^4
\\ &
+ (93024 b\mu^2-13632 b^2\mu+32\mu(25448\mu^2+25515\mu-2283)-32b(77b^2-282b-525)-6912) p^5
\\ &
+ (2784 b^3-30336 b^2\mu+81312 b\mu^2-303168 \mu^3-3264 b^2-81312 \mu^2-1440 b+12384 \mu) p^6
\\ &
+ (-192 b^3+2688 b^2\mu+192 b^2) p^7
\end{align*}
\begin{align*}
e_5&=53760+ (13440 b+91392) p+ (-2880 b^2-164160 \mu^2+34560 b-792768 \mu-26880) p^2
\\ & 
 + (-480 b^3+42720 b\mu^2-11520 b^2+1109088 \mu^2-13440 b+548640 \mu-33600) p^3
\\ & 
 + (62496 b^2\mu-275952 b\mu^2-885744 \mu^2-67536 \mu-75792 \mu^3-96b(34 b^2-50b+59)+12432) p^4
\\ & 
 + (160 b^3-8544 b^2\mu+38928 b\mu^2+266192 \mu^3+576 b^2+229104\mu^2+2016 b-13200 \mu-912) p^5
\\ & 
 + (160 b^3-3840 b^2\mu+17568 b\mu^2-89344 \mu^3-192 b^2-17568 \mu^2-96 b+1728 \mu) p^6+192 b^2\mu p^7
\end{align*}
\begin{align*}
e_6&=10752+ (2016 b+38976) p+ (-288 b^2-35232 \mu^2+10848 b-230880 \mu-14784) p^2
\\ &
+ (-24 b^3+6936 b\mu^2-2544 b^2+290664 \mu^2-4032 b+132888 \mu-3408) p^3
\\ &
+ (11568 b^2\mu-456 b^3-62472 b\mu^2-12816 \mu^3+816 b^2-193752 \mu^2-288 b-14328 \mu+1200) p^4
\\ &
+ (32 b^3-1536 b^2\mu+8688 b\mu^2+55168 \mu^3+38544 \mu^2+96 b-1248 \mu-48) p^5
\\ & 
 + (-192 b^2\mu+2016 b\mu^2-15744 \mu^3-2016 \mu^2+96 \mu) p^6
\end{align*}
\begin{align*}
e_7&=1344+ (168 b+9072) p+ (-12 b^2-4716 \mu^2+1824 b-44340 \mu-3024) p^2
\\ &
+ (624 b\mu^2-276 b^2+51252 \mu^2-504 b+19764 \mu-144) p^3
 \\&
 + (-24 b^3+1152 b^2\mu-8760 b\mu^2-1200 \mu^3+48 b^2-26568 \mu^2-1584 \mu+48) p^4
\\ &
+ (-96 b^2\mu+1008 b\mu^2+7072 \mu^3+3600 \mu^2-48 \mu) p^5+ (96 b\mu^2-1536 \mu^3-96 \mu^2) p^6
\end{align*}
\begin{align*}
e_8&=96+ (6 b+1236) p+ (-360 \mu^2+162 b-5424 \mu-300) p^2
\\ &
+ (24 b\mu^2-12 b^2+5868 \mu^2-24 b+1656 \mu) p^3+ (48 b^2\mu-696 b\mu^2-48 \mu^3-2088 \mu^2-72 \mu) p^4
\\&
+ (48 b\mu^2+512 \mu^3+144 \mu^2) p^5-64 \mu^3 p^6
\end{align*}
and the expressions for $e_9$ and $e_{10}$ are given a little bit further.

First, we will show that  $e_i\ge 0$, $i=1,\dots,8$.

\subsubsection*{Proof that $e_1,\dots, e_8> 0$}
It turns out that it is easiest is to use a computer-assisted proof in this case; to this end we developed the method which we call a {\em Box method}; 
it may have been described by other authors, but since we do not have the reference to the right source, we give its description below.

First of all, we substitute
$$
p=\frac{1+x_1}2,\ b=x_2, \mu=x_3; \quad x_i\in[0,1],\ i=1,2,3.
$$
Let ${\rm m}=\min_{a_i\le x_i\le b_i,i=1,2,3} f(x_1,x_2,x_3)$
where 
$$
f(x_1,x_2,x_3)=f_+(x_1,x_2,x_3)-f_-(x_1,x_2,x_3)
$$
and $f_+$ and $f_-$ are polynomials with non-negative coefficients. We want to show that   ${\rm m}>0$.

Let 
\begin{align*}
G_{f;M}&
=\min_{i_1,i_2,i_3=0,\dots,M-1} \left[
f_+\left(
\frac{i_1}{M}  ,
\frac{i_2}{M}  ,
\frac{i_3}{M}  
\right)
-
f_-\left(
\frac{i_1+1}{M}  ,
\frac{i_2+1}{M}  ,
\frac{i_3+1}{M}  
\right)\right].
\end{align*}
Since 
$$
{\rm m} \ge 
G_{f;M}\to
{\rm m}
$$
as $M\to\infty$, we conclude that ${\rm m}>0$ if and only if $G_{f,M}\ge 0$ for some $M\ge 1$. Checking that $G_{f,M}\ge 0$ can be quite tedious and time-consuming for large $M$, however, this could be easily accomplished with the help of a computer; please note, that the results are still {\em completely rigorous}, unlike e.g.\ simulations.

The results of application of this method to $e_1,\dots,e_8$ are presented in the following table:
\begin{center}
\begin{tabular}{ c c c c }
 $G_{e_1,2000}>825$, & $G_{e_2,500}>25$, & $G_{e_3,400}>1860$, & $G_{e_4,300}>2397$, \\ 
$G_{e_5,200}>672$, & $G_{e_6,200}>148$, & $G_{e_7,200}>5$, & $G_{e_8,400}>3$.
\end{tabular}
\end{center}
Consequently, $e_j>0$ for all $j=1,\dots,8$.

\subsubsection*{Proof that $e_9\ge 0$ and $e_{10}\ge 0$}
The Box method of the previous section would not work for $e_9$ and $e_{10}$, since these functions do touch zero in the required area, and hence the minimum is, in fact, $0$. Therefore, we have to handle these two cases analytically.

We have
\begin{align*}
e_9&=
4 p^2\mu(4\mu^2 p^3-18\mu p^2+99\mu p-3+15 p-96)
-12p^2+93p+3
+[6 p^2 (1-2\mu p) (2\mu p+1)]b,
\end{align*}
hence, the minimum is achieved either at $b=0$ or $b=1$.

For $\mu<1/(2p)$ we have  $e_9\ge e_{9a}$, where
\begin{align*}
e_{9a}&=e_9|_{b=0}=2 s^3 p^2-18 p^2 s^2+30 s p^2+99 s^2 p-12 p^2-192ps-3s^2+93p+3
\\
&=2p^2+(1-s)[6(1-p)+(1-s)(99p+2p^2s-14p^2-3)]\ge 0
\end{align*}
where $s=2p\mu\in[0,1]$.

In case $\mu \ge 1/(2p)$ we have $e_9\ge e_{9b}$, where
\begin{align*}
e_{9b}&=e_9|_{b=1}=
16p^5 s^3-24p^4 s^3-72p^4s^2+12p^3 s^3+468 p^3 s^2-2s^3 p^2-24p^3s-426p^2 s^2
\\ &
+24s p^2+111s^2 p+2p^2-18ps-3s^2+6s
\end{align*}
where $\mu=\frac1{2p}+s\left(1-\frac1{2p}\right)$, $s\in[0,1]$.
Now,
\begin{align*}
\frac{\partial^2}{\partial s^2}e_{9b}&=
6(2p-1)^2 (14+(2p-1)(2p^2 s-3p+15))\ge 0
\end{align*}
so the minimum of $e_{9b}$ w.r.t. $s$ is achieved where 
$\frac{\partial}{\partial s}e_{9b}=0$, i.e.
$$
s_{cr}={\frac {6p^2-33p+1+R}{2p^2 (2p-1) }}
,\quad\text{where}\quad
R=\sqrt {44p^4-400p^3+1105p^2-66p+1}
$$
and equals 
\begin{align*}
&\frac{3996p^5-284p^6-19956p^4+37329p^3-3291p^2+99p-1+(400p^3-44p^4-1105p^2+66p-1)R}{2p^4}
\\
& \ge 22120.5-1576\sqrt{197}=0.285896\dot>0
\end{align*}  
for $p\ge 1/2$.
\\[5mm]
Finally, trivially, we have $e_{10}=3p(2\mu p-1)^2\ge 0.$
Consequently, $s_7\ge 0$ and ${\bf I}_5\le 0$.

Combining this with the previously established inequalities 
${\bf I}_j\le 0$, $j=1,2,3,4$, we complete the proof Lemma~\ref{lemMod}.

\section*{Acknowledgement}
We would like to thank the anonymous referees for very careful reading our manuscript  and for giving many useful suggestions and corrections.

\end{document}